\newlist{longenum}{enumerate}{5}
\setlist[longenum, 1]{label=\roman*)}
\setlist[longenum, 2]{label=\alph*)}
\DeclareMathAlphabet{\pazocal}{OMS}{zplm}{m}{n}
\tikzset{->-/.style={decoration={
markings,
mark=at position .7 with {\arrow{>}}},postaction={decorate}}}
\tikzstyle{vertex} = [coordinate]
\newtheorem{lemma}{Lemma}[section]
\newtheorem{proposition}[lemma]{Proposition}
\newtheorem{remark-definition}[lemma]{Remark-Definition}
\newtheorem{theorem}[lemma]{Theorem}
\newtheorem{corollary}[lemma]{Corollary}
\newtheorem{proposition-conjecture}[lemma]{Proposition-conjecture}
\theoremstyle{definition}
\newtheorem{example}[lemma]{Example}
\newtheorem{definition}[lemma]{Definition}
\newtheorem{remark}[lemma]{Remark}
\newcommand{\R}{{\mathbb R}}
\newcommand{\Z}{{\mathbb Z}}
\newcommand{\RP}{{\mathbb {RP}}}
\newcommand{\vect}{\mathrm{Vect}}
\newcommand{\Vect}{\mathrm{Vect}}
\newcommand{\diff}[1]{\mathrm{d}  #1}
\newcommand{\Hom}{{H}}
\newcommand{\ad}{\mathrm{ad}}
\newcommand{\Ad}{\mathrm{Ad}}
\newcommand{\id}{\mathrm{Id}}
\newcommand{\Diffeo}{\mathrm{Diff}}
\newcommand{\oneform}{\alpha}
\newcommand{\oneformtwo}{\beta}
\newcommand{\circulation}{ \mathcal C}
\newcommand{\Diff}{\mathrm{Diff}}
\newcommand{\curl}{\mathrm{curl}}
\newcounter{ai}
\newcounter{ik}
\title{
Coadjoint orbits of area-preserving diffeomorphisms of non-orientable surfaces}
\author{
Anton Izosimov\thanks{Department of Mathematics, University of Arizona, e-mail: \tt{izosimov@math.arizona.edu}}, Boris Khesin\thanks{Department of Mathematics, University of Toronto
e-mail: {\tt khesin@math.toronto.edu}
}\,, and Ilia Kirillov\thanks{Department of Mathematics, University of Toronto
e-mail: {\tt kirillov@math.utoronto.ca}
}
}
\date{}
\pgfplotsset{compat=1.14} 
\begin{document}
\maketitle
\begin{abstract}
We give a classification of generic coadjoint orbits for the group of area-preserving diffeomorphisms of a closed non-orientable surface. This completes V.\,Arnold's program of studying invariants of incompressible fluids in 2D.
As an auxiliary problem, we also classify simple Morse pseudo-functions on non-orientable surfaces up to area-preserving diffeomorphisms.
\end{abstract}

\tableofcontents
\section{Introduction}
\label{section:intro}

Non-orientable manifolds do not carry non-vanishing volume forms but allow {densities} (also called pseudo-forms). 
Densities can be thought of as non-vanishing top-degree forms  whose sign changes after returning to the same point along an orientation-reversing loop. Fix a density $\rho$ on a non-orientable manifold $N$ and consider the infinite-dimensional group $\Diff_\rho(N)$ of {measure-preserving diffeomorphisms} of $N$. In the present paper we study the group~$\Diff_\rho(N)$ in the case when $N$ is a closed non-orientable surface. Our first main result is a classification of generic pseudo-functions on such surfaces with respect to the action of~$\Diff_\rho(N)$. The second result is a classification of generic coadjoint orbits of~$\Diff_\rho(N)$. 
We also comment on Casimir functions (i.e. invariants of the coadjoint action) and similar classifications for the case of non-orientable surfaces with boundary.

The motivation for our study comes from hydrodynamics. Two-dimensional ideal fluid dynamics is a good approximation for many real world processes (for instance, the earth atmosphere is well-approximated by 2D fluid equations). Hydrodynamics on non-orientable surfaces can also be observed in nature: for instance, the motion of a soap film on a wire takes place on a minimal surface, which, for a suitable wire shape, is a M\"obius band. Fluids on non-orientable surfaces were considered e.g. in~\cite{balabanova2022hamiltonian1, balabanova2022hamiltonian2, vanneste2021vortex}. 
Beyond hydrodynamics, coadjoint orbits of area-preserving diffeomorphisms also arise in general relativity \cite{donnelly2021gravitational, penna2020sdiff} in the context of correspondence between coadjoint orbits and representations suggested by the orbit method.

 \begin{figure}[b]
\centerline{
\begin{tikzpicture}[scale = 0.9]
\draw (0,1) .. controls (0,1.5) and (0.25,2) .. (0.5,2)
(0.5,2) ..controls (0.75,2) and (0.75,1.5) .. (1,1.5)
(2,2.5) ..controls (1.5,2.5) and (1.5,1.5) .. (1,1.5)
(2.2,1) .. controls (2.4,1.75) and (2.3,2.5) .. (2,2.5)
(1,-0.5) ..controls (0.5,-0.5) and (0,0.5) .. (0,1)
(1,-0.5) ..controls (2,-0.5) and (2,0.5) .. (2.2,1);
    \draw   (1.2,0.15) arc (260:100:0.5cm);
    \draw   (1,0.25) arc (-80:80:0.4cm);
\node [vertex] at (5,-0.5) (nodeA) {};
\node [vertex] at (5,0.2) (nodeB) {};
    \draw  [->-] (nodeA) -- (nodeB);
\node [vertex] at (5,1) (nodeC) {};
    \draw  [->-] (nodeB) .. controls +(-0.3,+0.4) .. (nodeC);
        \draw  [->-] (nodeB) .. controls +(0.3,+0.4) .. (nodeC);
            \node at (5.5,1.2) (nodeZ) {$\Gamma_F$};
\node [vertex] at (5,1.5) (nodeD) {};
    \draw  [->-] (nodeC) -- (nodeD);
\node [vertex] at (4.5,2) (nodeE) {};
    \draw  [->-] (nodeD) -- (nodeE);
\node [vertex] at (5.5,2.5) (nodeF) {};
    \draw  [->-] (nodeD) -- (nodeF);
\fill (nodeA) circle [radius=1.5pt];
\fill (nodeB) circle [radius=1.5pt];
\fill (nodeC) circle [radius=1.5pt];
\fill (nodeD) circle [radius=1.5pt];
\fill (nodeE) circle [radius=1.5pt];
\fill (nodeF) circle [radius=1.5pt];
\node [vertex] at (1.02,-0.5) (nodeAi) {};
\node [vertex] at (1.02,1.5) (nodeDi) {};
\node [vertex] at (0.5,2) (nodeEi) {};
\node [vertex] at (2,2.5) (nodeFi) {};
\fill (nodeAi) circle [radius=1.5pt];
\fill (nodeDi) circle [radius=1.5pt];
\fill (nodeEi) circle [radius=1.5pt];
\fill (nodeFi) circle [radius=1.5pt];
    \draw  [->] (-0.5,-0.5) -- (-0.5,2.5);
      \draw  [->, densely dashed] (3,1) -- (4,1);
    \node at (-0.8,1) (nodeA) {$F$};
        \node at (2.2,-0.1) (nodeA) {$M$};
\end{tikzpicture}
}
\caption{Reeb graph for a height function with two maxima on a torus.}\label{torusInt}
\end{figure}
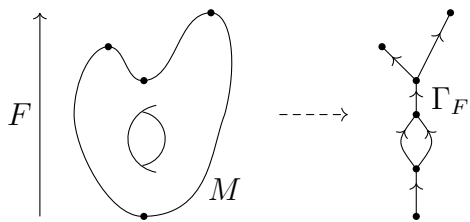

The presented classifications complete V.\,Arnold's program of studying coadjoint orbits of area-preserving diffeomorphisms initiated in  \cite{arnold1999topological} in view of applications to hydrodynamic stability. Recall that the corresponding problem for orientable surfaces with and without boundary was recently solved in \cite{izosimov2016coadjoint, izosimov2016characterization, kirillov2023classification}. At heart of those classifications is the description of orbits of functions under the action of area-preserving diffeomorphisms in terms of  measured Reeb graphs. The graph $\Gamma_F$, called the Reeb graph, is the set of connected components of level sets of a  function~$F$ on an orientable surface $M$. Critical points of $F$ correspond to the vertices of the graph~$\Gamma_F$, see Figure \ref{torusInt}.  
This graph comes with a natural parametrization by the values of $F$. For a surface equipped with an area form $\mu$, the measure $\mu$ induces a measure on the graph, which satisfies certain properties discussed below. 

A natural way to describe objects on a non-orientable manifold $N$ is to lift them to the double cover $\widetilde N$, which is an oriented manifold. This double cover comes with  a
fixed-point-free orientation-reversing involution $I: \widetilde N\to \widetilde N$ interchanging the
points in each fiber of the natural projection $\widetilde N\to N$. Pseudo-functions $F$ on a non-orientable manifold  $N$ are functions on its double cover $\widetilde N$ anti-invariant under the involution: 
$F\circ I=-F$. One can define simple Morse pseudo-functions on $N$ in a natural way: their lifts to $\widetilde N$ have to be Morse with distinct critical values. Our first result is the density of such pseudo-functions among all:

\begin{theorem} {\bf (=Theorem \ref{theorem_simple_pseudo})}
Simple Morse pseudo-functions on a compact non-orientable manifold form an open and dense subset in the space of all smooth pseudo-functions in $C^2$-topology.
\end{theorem}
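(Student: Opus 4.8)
The plan is to identify the space of smooth pseudo-functions with the closed linear subspace $\mathcal P\subset C^\infty(\widetilde N)$ of anti-invariant functions ($F\circ I=-F$), and to run the classical transversality proof that simple Morse functions are generic, but using only anti-invariant perturbations. The single structural fact that makes this work is that $I$ is fixed-point-free: for every $x\in\widetilde N$ one may choose a neighborhood $U\ni x$ with $U\cap I(U)=\varnothing$, so that for any $h$ supported in $U$ the combination $\phi:=h-h\circ I$ lies in $\mathcal P$ and agrees with $h$ on $U$. Thus anti-invariant functions are locally just as flexible as ordinary functions, and in particular one can prescribe the $1$-jet of a pseudo-function at any point. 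Note also that critical points come in $I$-pairs $p,I(p)$ with values $c,-c$, so ``distinct critical values'' means precisely that these values are nonzero with pairwise distinct absolute values.

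Openness is the easy half. Being Morse is a $C^2$-open condition on $C^\infty(\widetilde N)$: a Morse function has finitely many isolated nondegenerate critical points, and by compactness of $\widetilde N$ together with the implicit function theorem these persist, move continuously, and stay nondegenerate under $C^2$-small perturbations, while no new critical points appear. Having distinct critical values is likewise open, since finitely many distinct, nonzero real numbers stay distinct and nonzero under small perturbations. Restricting these open conditions to the subspace $\mathcal P$ preserves openness, so simple Morse pseudo-functions form an open set in $\mathcal P$.

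For density I would argue in two steps. First, approximate a given $F\in\mathcal P$ by a Morse pseudo-function. Using the local freedom above and compactness of $\widetilde N$, choose finitely many anti-invariant $\phi_1,\dots,\phi_m$ whose differentials $d\phi_1(x),\dots,d\phi_m(x)$ span $T^*_x\widetilde N$ at every $x$. For the finite-dimensional family $F_a=F+\sum_i a_i\phi_i$ (which stays in $\mathcal P$ since each $\phi_i$ is anti-invariant) the map $(x,a)\mapsto dF_a(x)$ is then a submersion, hence transverse to the zero section of $T^*\widetilde N$; by Thom's parametric transversality theorem $F_a$ is Morse for almost every $a\in\R^m$, and taking $a$ small gives a Morse pseudo-function $C^2$-close to $F$. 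Second, adjust the critical values: with $p_j,I(p_j)$ the critical pairs and $\pm c_j$ their values, pick neighborhoods $N_j\ni p_j$ with all $N_j,I(N_j)$ pairwise disjoint, bumps $\chi_j\equiv1$ near $p_j$ supported in $N_j$, set $h=\sum_j\varepsilon_j\chi_j$ and $\phi=h-h\circ I$. Then $\phi\in\mathcal P$ is locally constant (equal to $\varepsilon_j$, resp. $-\varepsilon_j$) near $p_j$, resp. $I(p_j)$, so it fixes the critical points and shifts the values to $\pm(c_j+\varepsilon_j)$; for generic small $\varepsilon=(\varepsilon_j)$ these are nonzero with distinct absolute values, and by the openness step $F+\phi$ is still Morse, hence a simple Morse pseudo-function close to $F$.

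The hard part is the first density step: forcing the transversality argument to run inside the constrained space $\mathcal P$ rather than in all of $C^\infty(\widetilde N)$. This is exactly where fixed-point-freeness of $I$ is indispensable, as it is what supplies enough anti-invariant perturbations to span every cotangent space; had $I$ possessed fixed points, anti-invariance would force the $1$-jet to vanish there and the transversality could genuinely fail.
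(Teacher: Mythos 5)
Your proof is correct, and the key density step is carried out by a genuinely different route than the paper's. The paper first proves an equivariant embedding lemma (any fixed-point-free involution on a compact manifold can be realized as the restriction of the antipodal map $x\mapsto -x$ on a vector space $V$), and then gets Morse density for free from the classical fact that for an embedded submanifold almost every function of the form $f+\ell$, $\ell\in V^*$, is Morse --- the point being that linear functionals are automatically odd under the antipodal map. You instead stay on $\widetilde N$ and manufacture anti-invariant perturbations locally, via $\phi=h-h\circ I$ with $\supp h\subset U$ and $U\cap I(U)=\varnothing$, then run parametric transversality for a finite spanning family. Both arguments hinge on fixed-point-freeness of $I$ in the same essential way (in the paper it is what keeps the image of the equivariant embedding off the diagonal, so that the map $\Psi$ is injective; in yours it is what lets anti-invariant functions realize arbitrary $1$-jets), and the second step --- separating critical values by locally constant odd bumps, with the correct observation that simplicity for an odd function means the $c_j$ are nonzero with pairwise distinct absolute values --- matches the paper's local argument. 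Your approach is more self-contained and quantifies exactly how much perturbative freedom survives the anti-invariance constraint; the paper's buys brevity by outsourcing the transversality to a citable theorem at the cost of proving the embedding lemma.
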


Our next result is a classification of simple Morse pseudo-functions in 2D. Let~$N$ be a closed (i.e. compact and without boundary) non-orientable surface equipped with density $\rho$. 
It turns out that invariants of the $\Diff_\rho(N)$-action on pseudo-functions 
are given by measured Reeb graphs of their lifts to the orientation double cover $\widetilde N$, equipped with an involution.

\begin{theorem}{\bf (=Theorem \ref{classification_of_pseudo-functions_symplectic})}
Let $N$ be a closed connected non-orientable 2D surface equipped with a density $\rho$. Then there is a one-to-one correspondence between simple Morse pseudo-functions on $N$, considered up to area-preserving diffeomorphisms, and isomorphism classes of measured Reeb graphs with involution compatible with $(N,\rho)$.
\end{theorem}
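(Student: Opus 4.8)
The plan is to reduce everything to the already-known classification of simple Morse functions on \emph{orientable} surfaces up to area-preserving diffeomorphisms (the measured-Reeb-graph classification of \cite{izosimov2016coadjoint, izosimov2016characterization, kirillov2023classification}) and to upgrade that classification to an $I$-equivariant statement, where $I$ is the deck involution of the orientation double cover $\pi\colon \widetilde N \to N$. First I would set up the correspondence on objects. A pseudo-function $F$ on $N$ is by definition an anti-invariant function $\widetilde F$ on $\widetilde N$, i.e. $\widetilde F \circ I = -\widetilde F$; since $\widetilde N$ is orientable, the density $\rho$ lifts to a genuine area form $\widetilde\rho$ with $I^*\widetilde\rho = -\widetilde\rho$. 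For a simple Morse pseudo-function the lift $\widetilde F$ is a simple Morse function, so it has an honest measured Reeb graph $(\Gamma_{\widetilde F}, m)$. Because $\widetilde F\circ I = -\widetilde F$, the involution $I$ sends $\{\widetilde F = c\}$ to $\{\widetilde F = -c\}$ and therefore descends to an involution $\sigma$ of $\Gamma_{\widetilde F}$ reversing the parametrizing function and preserving $m$.

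A key structural observation I would record at the outset is that $0$ is automatically a regular value of $\widetilde F$: a critical point at level $0$ would, together with its $I$-image, produce two distinct critical points with the same critical value, contradicting simplicity. Hence $\{\widetilde F = 0\}$ is a disjoint union of circles on which $I$ acts freely, and the fixed points of $\sigma$ sit exactly at the level-$0$ points of the edges of $\Gamma_{\widetilde F}$; they record the orientation-reversing circles of $N$. This produces the measured Reeb graph with involution, and I would verify it is compatible with $(N,\rho)$ directly from the construction. Well-definedness on orbits is then routine: an area-preserving diffeomorphism $\Phi$ of $N$ lifts to a diffeomorphism $\widetilde\Phi$ of $\widetilde N$ commuting with $I$, preserving $\widetilde\rho$, and carrying $\widetilde F_1$ to $\widetilde F_2$, hence inducing a $\sigma$-equivariant isomorphism of measured Reeb graphs.

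The heart of the proof is the converse. Given a $\sigma$-equivariant isomorphism of the measured Reeb graphs of $\widetilde F_1$ and $\widetilde F_2$, I would construct an $I$-equivariant area-preserving diffeomorphism of $\widetilde N$ realizing it, which then descends to $N$. Rather than invoking the orientable realization as a black box and symmetrizing the resulting map (e.g. comparing $\widetilde\Phi$ with $I\widetilde\Phi I$, which conjugates the same pair of functions and induces the same graph isomorphism, but whose ``average'' is hard to control), I would redo the region-by-region construction of the orientable case equivariantly: build the diffeomorphism over a fundamental domain of $\sigma$ in the graph and extend it by $I$, so that equivariance holds by construction. Over the part of $\Gamma_{\widetilde F}$ where $\sigma$ acts freely this is immediate.

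The main obstacle, and the only place where non-orientability genuinely enters, is a neighborhood of a level-$0$ circle $C\subset\{\widetilde F = 0\}$ with $I(C)=C$. There the construction must commute with the \emph{free} involution $I|_C$ (an antipodal-type map of the circle), and the flux/measure matching that glues the two sides of $C$ must be performed $I$-equivariantly. I expect to handle this via an explicit $I$-equivariant local normal form for an anti-invariant Morse function on an annular neighborhood of $C$, together with the standard fact that two area forms of equal total measure on an annulus are equivariantly conjugate by a map fixing the boundary; the compatibility hypothesis is precisely what guarantees these local pieces exist and are consistent with the global identification $\widetilde N/I = N$. Surjectivity follows from the same equivariant construction: starting from an abstract compatible measured Reeb graph with involution, one realizes the underlying measured graph on an oriented surface by the orientable theory, carries out the realization over a fundamental domain, and extends by an involution so as to reproduce $(\widetilde N, I,\widetilde\rho)$; compatibility ensures the resulting surface-with-involution is the orientation double cover of $(N,\rho)$, so the anti-invariant function descends to a simple Morse pseudo-function on $N$ with the prescribed invariants.
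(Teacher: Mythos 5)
Your overall strategy --- pass to the orientation double cover and make the known orientable classification $I$-equivariant --- is the same as the paper's, and your preliminary observations ($0$ is a regular value of $\widetilde F$, fixed points of $\iota$ sit on the zero level, well-definedness on orbits) all match. But for the two substantive steps you take a genuinely different route, and it is worth recording that the paper does exactly what you chose to avoid: it takes the non-equivariant lift $\Phi$ from the orientable theory as a black box and then corrects it --- not by averaging, but by post-composing with a shear $S_{-\xi}$ along the Hamiltonian foliation of $F$. The commutator $\Psi=I\circ\Phi^{-1}\circ I\circ\Phi$ preserves every $F$-level, hence equals $S_\psi$ on the union $M_0$ of cylinders over the zero-crossing edges; solving $\xi-\iota^*\xi=\psi$ with $\xi$ vanishing near the lower ends of $M_0$ makes $\Phi\circ S_{-\xi}$ commute with $I$ there, and it extends to the rest of $M$ by $\Phi$ below and by $I\widetilde\Phi I$ above. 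The only obstruction is an integrality condition (the integer-valued function $(\psi+\iota^*\psi)/t$ must be even on $\iota$-fixed edges, $t$ being the period function), discharged by the observation that on an $I$-invariant zero circle the free involution $I$ is forced to be the half-period shift of $\omega^{-1}dF$. Realization is handled in the same spirit: lift $\iota$ to some $I_1$ with $I_1^*F=-F$, correct it to an involution by a shear, remove its fixed circles by composing with half-period twists, and conjugate the result to the given $I$ by the non-orientable Moser theorem. So the ``correction of a black-box lift'' you dismissed as hard to control is in fact the engine of the paper's proof, and it is quite clean.

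Your alternative --- an equivariant fundamental-domain reconstruction --- is plausible, but as written it defers precisely the steps where the mathematical content lies, and these should be flagged as genuine gaps. First, the gluing across $\{F=0\}$ must be addressed at \emph{every} zero circle, not only the $I$-invariant ones: for an edge $e$ with $e\neq\iota(e)$, extending by $I$ couples the germ of your map just below $C_e$ to its germ just below $C_{\iota(e)}$, so you need equivariance on a two-sided collar (or jet-matching), which ``construct on $\{F\le 0\}$ and extend'' does not provide by itself. Second, on an $I$-invariant circle the existence of your ``$I$-equivariant local normal form'' rests exactly on the half-period-shift fact above; without it there is a potential $\Z$-valued obstruction to choosing the boundary values of the map equivariantly, and this must be proved rather than assumed. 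Third, for surjectivity the assertion that ``compatibility ensures the resulting surface-with-involution is the orientation double cover of $(N,\rho)$'' hides two nontrivial inputs: that fixed-point-free orientation-reversing involutions of a closed orientable surface with diffeomorphic quotients are smoothly conjugate, and that the conjugating map can be taken symplectic (an equivariant Moser argument, which the paper isolates as a separate step with a citation). None of these gaps looks fatal, but the proposal postpones the equivariant difficulties rather than resolving them.
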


\begin{example}
 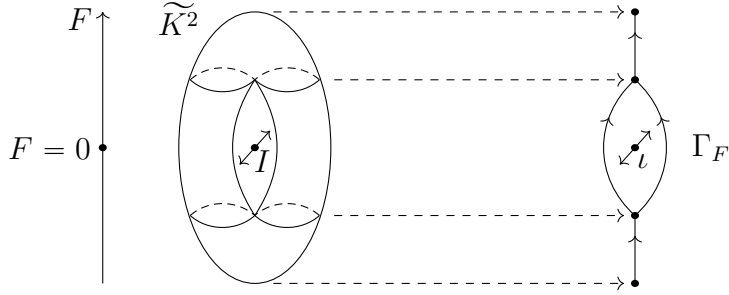
\begin{figure}[t]
\centerline{
\begin{tikzpicture}[yscale = 0.9]
    \draw (2,2) ellipse (1cm and 2cm);
   \draw   (1.15,1.0) to [bend right = 45] (2,1.);
   \draw   (2.85,1.0) to [bend left = 45] (2,1.);
   \draw   [densely dashed] (1.15,1.0) to [bend left = 45] (2,1.);
   \draw   [densely dashed] (2.85,1.0) to [bend right = 45] (2,1.);
      \draw   (1.15,3.0) to [bend right = 45] (2,3.);
   \draw   (2.85,3.0) to [bend left = 45] (2,3.);
   \draw   [densely dashed] (1.15,3.0) to [bend left = 45] (2,3.);
   \draw   [densely dashed] (2.85,3.0) to [bend right = 45] (2,3.);
   \draw (2,1.) to [bend left = 30] (2,3.);
     \draw (2,1.) to [bend right = 30] (2,3.);
    \draw  [->] (0,0) -- (0,4);
    \node at (-0.3,3.9) (nodeA) {$F$};
    \node at (-0.7,2) (nodeAA) {$F=0$};
    \fill (0,2) circle [radius=1.5pt];
    \node at (1,3.9) (nodeG) {$\widetilde{K^2}$};
    \node at (8,2) (nodeB) {$\Gamma_F$};
    \node [vertex] at (7,0) (nodeC) {};
    \node [vertex]  at (7,1) (nodeD) {};
    \node [vertex] at (7,3) (nodeE) {};
    \node [vertex]  at (7,4) (nodeF) {};
    \node  at (2.1,4) (nodeG) {};
    \node  at (7,4) (nodeFdouble) {};
    \node  at (7,0) (nodeCdouble) {};
    \node  at (7,1) (nodeDdouble) {};
    \node at (7,3) (nodeEdouble) {};
    \node  at (2.1,0) (nodeH) {};
    \node  at (2.9,1) (nodeI) {};
    \node  at (2.9,3) (nodeJ) {};
    \draw  [->-] (nodeC) -- (nodeD);
    \fill (nodeC) circle [radius=1.5pt];
    \fill (nodeD) circle [radius=1.5pt];
    \fill (nodeE) circle [radius=1.5pt];
    \fill (nodeF) circle [radius=1.5pt];
    \draw  [->-] (nodeD)  to [bend left = 45] (nodeE);
    \draw  [->-] (nodeD)  to [bend right = 45] (nodeE);
    \draw  [->-] (nodeE) -- (nodeF);
    \draw  [dashed, ->] (nodeG) -- (nodeFdouble);
    \draw  [dashed, ->] (nodeH) -- (nodeCdouble);
    \draw  [dashed, ->] (nodeI) -- (nodeDdouble);
    \draw  [dashed, ->] (nodeJ) -- (nodeEdouble);
    \node at (7.1,1.8) (nodeAAA) {$\iota$};
    \draw [ <->] (1.8,1.75) --(2.2,2.25);
       \draw [ <->]  
    (6.8,1.75)-- (7.2,2.25);
    \node at (2.1,1.8) (nodeAAA) {$I$};
    \fill (2,2) circle [radius=1.5pt];
    \fill (7,2) circle [radius=1.5pt];
\end{tikzpicture}
}
\caption{Reeb graph for a height function on a torus  inducing a pseudo-function on the Klein bottle $K^2$. The involution $I$ on the torus and $\iota$ on the graph are central symmetries.}\label{torus}
\end{figure}
    The height function $F$ on a torus $T^2=\widetilde{K^2}$ is odd with respect to the central symmetry $I$ and hence induces a pseudo-function on the Klein bottle $K^2 = T^2\,/\,I$, see Figure \ref{torus}. The measured Reeb graph~$\Gamma_F$ with an involution $\iota$ is a complete invariant of the corresponding pseudo-function on the Klein bottle.
\end{example}

The classification of coadjoint orbits of the group $\Diff_\rho(N)$ of area-preserving diffeomorphisms of a non-orientable surface $N$ requires a more subtle set of data than the measured Reeb graph with an involution. Namely, elements of the regular dual space $\Vect_\rho^*(N) $ to the Lie algebra $\Vect_\rho(N)$ are 1-form cosets $[\alpha]\in  \Omega^1(N)\,/\,\diff\Omega^0(N)$. One associates to such a coset the vorticity pseudo-function $\curl[\alpha] := {\diff \alpha}/{\rho}$. This way the classification of 
such cosets with respect to area-preserving diffeomorphisms can be seen as a refinement of the pseudo-function classification: one needs to augment the measured Reeb graph of $F=\curl[\alpha]$ by additional information, carried by the so-called circulation function described below.
This allows one to formulate the full classification of generic coadjoint orbits in terms of  
circulation graphs.

\begin{theorem}{\bf (=Theorem \ref{thm:sdiffN})} 
Let $N$ be a closed connected non-orientable surface equipped with a density $\rho.$ Then simple Morse coadjoint orbits of $\Diff_\rho(N)$ are in one-to-one correspondence with isomorphism classes of circulation graphs compatible with $(N,\rho)$. 
\end{theorem}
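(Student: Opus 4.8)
The plan is to reduce the statement to an equivariant version of the already-known classification of coadjoint orbits on the orientable double cover, and then to read the invariants off on the quotient. Throughout I identify the regular dual $\Vect_\rho^*(N)$ with the space of cosets $\Omega^1(N)\,/\,\diff\Omega^0(N)$, on which $\Diff_\rho(N)$ acts by pullback, so that two cosets lie on the same coadjoint orbit exactly when one is carried to the other by an area-preserving diffeomorphism. Lifting to the orientation double cover $\widetilde N$, a coset $[\alpha]$ becomes an $I$-invariant coset $[\widetilde\alpha]$ on the oriented surface $(\widetilde N,\widetilde\rho)$, where $\widetilde\rho$ is the genuine area form with $I^*\widetilde\rho=-\widetilde\rho$, and $\Diff_\rho(N)$ corresponds to the $I$-equivariant area-preserving diffeomorphisms of $\widetilde N$. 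Thus I first recast the whole problem as the classification of $I$-invariant cosets on $\widetilde N$ up to $I$-equivariant area-preserving diffeomorphisms, a setting in which the orientable classification via circulation graphs is available and needs only to be made equivariant.

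The invariants are built in two layers. The coarse layer is the vorticity $\omega=\curl[\alpha]=\diff\alpha/\rho$, a simple Morse pseudo-function whose lift $\widetilde\omega=\diff\widetilde\alpha/\widetilde\rho$ is $I$-anti-invariant; by the classification of simple Morse pseudo-functions (the preceding theorem) its orbit is encoded by a measured Reeb graph with involution $(\Gamma,\iota)$ compatible with $(N,\rho)$, and since curl is equivariant this datum is a genuine coadjoint invariant. The fine layer is the circulation function $\circulation$, defined on each regular level component of $\widetilde\omega$ — an embedded circle oriented by $\widetilde\rho$ together with $\diff\widetilde\omega$ — as the circulation $\oint\widetilde\alpha$ of the lifted form. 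I will verify that $\circulation$ is well defined, because adding $\diff f$ to a representative does not change integrals over closed curves; that it is invariant under $I$-equivariant area-preserving diffeomorphisms; and that it obeys a definite transformation rule under $\iota$, forced by the orientation reversal of $I$ combined with the $I$-invariance of $\widetilde\alpha$, together with jump/matching relations at the vertices of $\Gamma$ dictated by the local saddle model. This transformation rule and these relations constitute precisely the compatibility conditions defining an admissible circulation graph, and by construction the pair $(\Gamma,\iota;\circulation)$ separates the orbits lying over a fixed vorticity class.

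For completeness (injectivity), given two cosets with isomorphic circulation graphs, the pseudo-function theorem lets me apply an $I$-equivariant area-preserving diffeomorphism bringing their vorticities to a common $\omega$; the representatives then differ by a closed $I$-invariant $1$-form whose periods over all level components agree (equal circulations) and whose cohomology class is thereby pinned down. A Moser-type deformation along the segment $\widetilde\alpha_t$ yields a time-dependent vector field that is area-preserving and annihilates $\omega$, its solvability at each time being exactly the matching of circulations; performing this deformation $I$-equivariantly makes it descend to $\Diff_\rho(N)$. For realization (surjectivity), starting from an abstract compatible circulation graph I first realize its measured Reeb graph with involution by a pseudo-function $\omega$, and then construct an $I$-invariant primitive $\widetilde\alpha$ with $\diff\widetilde\alpha=\omega\,\widetilde\rho$ whose circulations equal the prescribed values; the compatibility conditions guarantee that the local pieces glue, and the outcome descends to a coset on $N$ realizing the given graph.

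The main obstacle I expect is the equivariance with respect to the orientation-reversing, fixed-point-free involution $I$, and in particular its behavior near the zero level $\{\widetilde\omega=0\}$, where $\iota$ acts and the orientation induced by $\widetilde\rho$ and $\diff\widetilde\omega$ flips. Getting the transformation rule of $\circulation$ exactly right across this level, and arranging the Moser deforming vector field to commute with $I$ while fixing the vorticity, is the technical heart of the argument; everything else is a straightforward equivariant refinement of the orientable theory.
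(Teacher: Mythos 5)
Your proposal follows essentially the same route as the paper: pass to the orientation double cover and work equivariantly with respect to the fixed-point-free anti-symplectic involution, use the pseudo-function classification to handle the vorticity layer, separate orbits over a fixed measured Reeb graph by the even circulation function, prove injectivity by a flow tangent to the vorticity levels (the paper uses the autonomous field $G\,\omega^{-1}\diff F$ with $GF\diff F$ closed and representing the difference class, averaged to be even — the same mechanism as your equivariant Moser deformation, whose solvability is exactly the matching of circulations), and prove surjectivity by realizing any admissible even circulation function (the paper does this via surjectivity of $\pi_*\colon \Hom_1^{odd}(M;\R)\to \Hom_1^{odd}(\Gamma;\R)$ and twisted Poincar\'e duality). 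The plan is correct and matches the paper's proof in all essential respects.
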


\begin{corollary}{\bf (=Corollary \ref{cor:dim})} \label{cor:dim0}
Let $N$ be a closed connected non-orientable surface equipped with a density $\rho$.
Then the space of coadjoint orbits of the group $\Diff_\rho(N)$ corresponding to the same measured Reeb graph $\Gamma$ with involution $\iota$ is an affine space of dimension
   \begin{equation}
     d =  \frac{1}{2}(  \#\mathrm{Fix}(\iota) +  b_1(N) - 1),     
   \end{equation}
    where $\#\mathrm{Fix}(\iota)$ is the number of fixed points of the involution $\iota$, and $b_1(N) = \dim \Hom^{}_1(N; \R)$ is the first Betti number of $N$. In particular,
   \begin{equation}
      \frac{1}{2}(  b_1(N) - 1)  \leq d \leq b_1(N).
    \end{equation}
\end{corollary}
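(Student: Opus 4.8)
The plan is to read off the dimension count from Theorem \ref{thm:sdiffN} by analysing the fibers of the vorticity map $\curl\colon \Vect_\rho^*(N)\to\{\text{pseudo-functions}\}$. Fix a simple Morse vorticity $F$ whose measured Reeb graph with involution is the given pair $(\Gamma,\iota)$. Two $1$-form cosets have the same vorticity precisely when they differ by a closed form, so the fiber $\curl^{-1}(F)$ is an affine space modelled on $H^1(N;\R)$, of dimension $b_1(N)$. By Theorem \ref{classification_of_pseudo-functions_symplectic} all vorticities with graph $(\Gamma,\iota)$ form a single $\Diff_\rho(N)$-orbit, so the space of coadjoint orbits lying over $(\Gamma,\iota)$ is the quotient $\curl^{-1}(F)\big/\Stab(F)$, where $\Stab(F)\subset\Diff_\rho(N)$ is the stabilizer of $F$. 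Hence this orbit space is again affine and
\[
d \;=\; b_1(N)-r,\qquad r:=\dim\big(\Stab(F)\text{-orbit through a point of }\curl^{-1}(F)\big).
\]
The upper bound $d\le b_1(N)$ is then immediate from $r\ge 0$, and the lower bound $d\ge\tfrac12(b_1(N)-1)$ will follow from the formula together with $\#\mathrm{Fix}(\iota)\ge 0$; so the whole statement reduces to computing $r$.

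Next I would compute $r$ infinitesimally. The Lie algebra of $\Stab(F)$ consists of $\rho$-divergence-free fields tangent to the level sets of $F$; lifting to the orientation cover $\widetilde N$ and writing $\iota_v\widetilde{\rho}=h\,\diff{\widetilde{F}}$, such fields correspond to functions $h$ that are constant on connected components of level sets (i.e. functions on $\Gamma$) and that are $\iota$-invariant, the invariance being exactly the condition that $v$ descend to $N$ (it uses that both $\widetilde F$ and $\widetilde\rho$ are odd under the deck involution). The induced motion inside the fiber is $[\alpha]\mapsto[\,\widetilde{F}\,h\,\diff{\widetilde{F}}\,]$; its pairing with a level curve vanishes because $\diff{\widetilde{F}}=0$ along the curve, while its pairing with a cycle transverse to the levels is a circulation around a cycle of $\Gamma$. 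Thus $r$ equals the number of independent graph-cycle circulations that $\iota$-symmetric level-set rotations can adjust, which I would identify with the dimension of a suitable eigenspace of $\iota_*$ on the cycle space of $\Gamma$.

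I expect this identification to be the main obstacle. It requires the precise orientation bookkeeping by which the level-reversing involution $\iota$ converts the symmetry condition on the circulation function into an invariance/anti-invariance condition on the associated homology cycle (the extra sign coming from $\iota$ reversing the increasing-$F$ coorientation of each edge), and it requires checking that non-separating, degree-two saddles contribute exactly so that the final count is governed by $b_1(N)$ rather than by $b_1(\Gamma)$ alone. In the cleanest case, where $\widetilde F$ has no non-separating saddles and hence $b_1(\Gamma)=b_1(N)$, the computation collapses to $d=\dim H_1(\Gamma;\R)^{-}$, the anti-invariant part of the graph's first homology.

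Finally I would evaluate the eigenspace dimension by the Lefschetz fixed-point theorem for $\iota$ on $\Gamma$. Since $\widetilde F\circ I=-\widetilde F$, every fixed point of $\iota$ lies on the zero level; it is neither an extremum (wrong level) nor a trivalent saddle (there $\iota$ would have to exchange the single upper edge with the two lower ones), so each fixed point is an isolated point at which $\iota$ reverses the local edge direction, of Lefschetz index $+1$. Hence $L(\iota)=\#\mathrm{Fix}(\iota)$, while $L(\iota)=\mathrm{tr}\big(\iota_*|_{H_0(\Gamma)}\big)-\mathrm{tr}\big(\iota_*|_{H_1(\Gamma)}\big)=1-\mathrm{tr}\big(\iota_*|_{H_1(\Gamma)}\big)$ since $\Gamma$ is connected, giving $\mathrm{tr}\big(\iota_*|_{H_1(\Gamma)}\big)=1-\#\mathrm{Fix}(\iota)$. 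Feeding this trace into the eigenspace dimension, together with the identity $b_1(\Gamma)=b_1(N)$ (compatible with $\chi(\widetilde N)=2\chi(N)$), yields
\[
r=\tfrac12\big(b_1(N)+1-\#\mathrm{Fix}(\iota)\big),\qquad d=b_1(N)-r=\tfrac12\big(\#\mathrm{Fix}(\iota)+b_1(N)-1\big),
\]
and the inequalities $\tfrac12(b_1(N)-1)\le d\le b_1(N)$ follow as above. The Lefschetz count and the Betti-number identity are routine; the delicate part is the orientation/contribution analysis of the middle step.
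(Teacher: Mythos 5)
Your endgame coincides with the paper's: fixed points of $\iota$ are interior edge points on the zero level (vertices carry distinct non-zero values of $f$ by definition), each of Lefschetz index $+1$, and the Hopf trace identity $\#\mathrm{Fix}(\iota)=1-\dim \Hom_1^{even}(\Gamma;\R)+\dim \Hom_1^{odd}(\Gamma;\R)$ is exactly how the paper converts $\dim \Hom_1^{odd}(\Gamma;\R)$ into the stated formula. (Your worry about non-separating saddles is moot: $b_1(\Gamma)=b_1(N)$ is part of the compatibility condition in Definition~\ref{def:compatible2}, and it holds automatically since $b_1(\widetilde N)=2b_1(\Gamma)$ for any simple Morse function on an orientable surface and $b_1(\widetilde N)=2b_1(N)$.) The genuine gap is the step you yourself flag as ``the main obstacle.'' As written, your stabilizer analysis only shows that the tangent space $K$ to the $\Stab(F)$-orbit, spanned by classes $[\widetilde F h\,\diff \widetilde F]$, is \emph{contained} in the kernel of the level-period map $P\colon \Hom^1_{even}(\widetilde N;\R)\to \Hom_1^{odd}(\Gamma;\R)$; this gives $d\ge\dim\mathrm{im}(P)$ and nothing that pins $d$ to $\dim \Hom_1^{odd}(\Gamma;\R)$. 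Two separate facts are still needed: (a) $K=\ker P$, i.e.\ every even class with vanishing periods over all $F$-levels is of the form $[G\widetilde F\,\diff\widetilde F]$ with $G$ even and is actually reached by integrating the stabilizer action --- the paper gets this from \cite[Lemma~4.8]{izosimov2016coadjoint} (the nontrivial point being that the resulting Hamiltonian is divisible by $\widetilde F$) followed by the flow of $G\omega^{-1}\diff\widetilde F$; and (b) $\mathrm{im}(P)=\Hom_1^{odd}(\Gamma;\R)$, which the paper obtains via twisted Poincar\'e duality $\Hom^1_{even}(\widetilde N;\R)\simeq \Hom_1^{odd}(\widetilde N;\R)$ and surjectivity of $\pi_*$ on odd homology. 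Neither is routine, and without both your argument yields only inequalities for $d$.

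Note also that the paper does not run the fiber-modulo-stabilizer argument for this corollary at all: it reads the answer off Theorem~\ref{thm:sdiffN}, under which the orbits over a fixed measured Reeb graph are literally the even circulation functions on it, and then proves separately that these form a \emph{nonempty} affine space modeled on $\Hom_1^{odd}(\Gamma;\R)$ --- nonemptiness by averaging an arbitrary circulation function with its $\iota$-pullback, and the linear model by identifying Kirchhoff solutions $\xi$ with the cycles $\sum\xi\vert_e\cdot e$, where the sign bookkeeping you were worried about is resolved in one line: since $\iota$ reverses edge orientations, even functions $\xi$ correspond precisely to odd cycles. Your items (a) and (b) are in substance the proof of Theorem~\ref{thm:sdiffM}; either cite that theorem and take the circulation-graph route, or supply (a) and (b) explicitly.
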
 

\begin{remark}
    Here we encounter a completely new phenomenon, not observed for orientable surfaces. Namely,
    for an orientable surface $M$ the corresponding dimension $d$ is always $ \frac{1}{2}   b_1(M)$, i.e. the genus of $M$. On the other hand, for non-orientable surfaces the dimension of the space of coadjoint orbits for a given vorticity
    is determined not only by the topology of the surface, but also by more subtle information about the involution action on the vorticity  Reeb graph.     
\end{remark}

\begin{example}[ for details see Example \ref{ex:Klein3p}]\label{ex:Klein3}
 \begin{figure}[t]
\centerline{
\begin{tikzpicture}[xscale = 0.75, yscale = 0.6]
    \node [vertex] at (7,0) (nodeC) {};
    \node [vertex]  at (7,1) (nodeD) {};
    \node [vertex] at (7,3) (nodeE) {};
    \node [vertex]  at (7,4) (nodeF) {};
    \node  at (7,4) (nodeFdouble) {};
    \node  at (7,0) (nodeCdouble) {};
    \node  at (7,1) (nodeDdouble) {};
    \node at (7,3) (nodeEdouble) {};
    \draw  [->-] (nodeC) -- (nodeD);
    \fill (nodeC) circle [radius=1.5pt];
    \fill (nodeD) circle [radius=1.5pt];
    \fill (nodeE) circle [radius=1.5pt];
    \fill (nodeF) circle [radius=1.5pt];
    \draw  [->-] (nodeD)  to [bend left = 45] (nodeE);
    \draw  [->-] (nodeD)  to [bend right = 45] (nodeE);
    \draw  [->-] (nodeE) -- (nodeF);
       \draw [ <->] 
    (8,1.5) --
   (8,2.5) node[right] {$\iota$};;
   \draw [dashed] (5,2) -- (9,2);
\end{tikzpicture}
}
\caption{A graph involution $\iota$ with two fixed points. It is given by symmetry with respect to the dashed line.}\label{torusgraph2}
\end{figure}
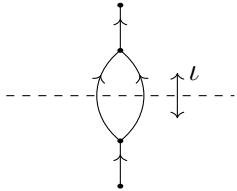

Consider the height function on a vertically standing torus  shown in Figure~\ref{torus}, interpreted as a pseudo-function on the Klein bottle $K^2.$ Since $b_1(K^2) = 1$ and $\#\mathrm{Fix}(\iota) = 0$, the space of coadjoint obits corresponding to the given graph is $0$-dimensional, i.e. the graph completely determines the orbit.

On the other hand, consider the height function on the torus lying on a  slightly inclined table. It is again odd with respect to the central symmetry and hence defines  a pseudo-function on $K^2$.
The corresponding Reeb graph is shown in Figure \ref{torusgraph2}. Here one has $\#\mathrm{Fix}(\iota) = 2$, so the space of coadjoint orbits of $\Diff_\rho(K^2)$ corresponding to such a function is $1$-dimensional. 
Note that $b_1(K^2)=1$, so $0$ and $1$ are the only possible dimensions $d$ of the orbit space for the Klein bottle.
\end{example}

This paper is organized as follows. In Section~\ref{section:geometry} we discuss certain notions of differential geometry relevant for non-orientable manifolds. 
In Section~\ref{section:func} we present a classification of simple Morse pseudo-functions on non-orientable surfaces up to an area-preserving transformation. In Section~\ref{section:orbits} we obtain a classification of generic coadjoint orbits for the group of area-preserving diffeomorphisms of a non-orientable surface. 

In Appendix~\ref{sect:hydro}, we present the hydrodynamic motivation for these classifications.
{ In Appendix~\ref{section:vort}, we describe singular coadjoint orbits  corresponding to vortex membranes in the non-orientable setting. Namely, it turns out that one can define an analog of the Marsden–Weinstein symplectic structure on co-oriented submanifolds of codimension 2. Furthermore, the binormal equation 
governing the evolution of vortex membranes also makes sense on non-orientable manifolds.}

\paragraph{Acknowledgements.} A.I. was supported by NSF grant DMS-2008021. B.K. was partially supported by an NSERC Discovery Grant. The authors are grateful to Klas Modin and Gleb Smirnov for fruitful discussions.

\section{Geometry of non-orientable manifolds}\label{section:geometry}
\subsection{Orientation double cover and orientation bundle}
Let $N$ be a non-orientable manifold. We define its \emph{orientation double cover} $\widetilde N$ 
as the space of pairs $(x, \mathcal O)$, where $x \in N$, and $\mathcal O$ is an orientation of the tangent space~$T_x N$. Clearly, $\widetilde N$ is an orientable manifold. Moreover, defined in this way, the orientation double cover $\widetilde N$ is \emph{canonically oriented}: the orientation of $T_{(x,\mathcal O)}\widetilde N$ is defined by pulling back the orientation $\mathcal O$ from~$T_x N$.

Let $N$ be a non-orientable manifold, and $\widetilde N$ be its double cover. Then there is a fixed-point-free orientation-reversing involution $I\colon \widetilde{N}\to \widetilde{N}$ which interchanges the points in each fiber of the projection $ \widetilde N \to N$.  Conversely, let $M$ be an {oriented} \emph{connected} manifold equipped with a fixed-point-free orientation-reversing involution~$I$. 
Then the quotient space $M\,/\,I$ is a non-orientable manifold whose orientation double cover is canonically diffeomorphic to $M$. 
Thus, one can go back and forth between connected non-orientable manifolds and connected oriented manifolds equipped with a fixed-point-free orientation-reversing involution. We will be using both models interchangeably throughout the paper. { Note that while, in principle, everything can be done on the double cover, in some cases we found it more convenient to directly work with the non-orientable manifold. }

Closely related to the orientation double cover is the concept of \emph{orientation bundle}. Viewing the orientation double cover $\widetilde N \to N$ as a principal $\Z_2$-bundle, define the {orientation bundle} as the associated line bundle. In other words, given an atlas on $N$, the orientation bundle is given by transition functions $\mathrm{sign}(J(\phi_{\alpha\beta}))$ where $\phi_{\alpha\beta}$ are transition maps between charts, and $J$ is the Jacobian determinant. We denote the orientation bundle of $N$ by $o(N)$.
\subsection{Differential forms of even and odd type}
\label{section:dif_forms}

The language of even and odd differential forms was introduced by de Rham \cite{de2012differentiable}. Let~$N$ be a non-orientable manifold.
\emph{Differential $k$-forms of even type} are just regular differential $k$-forms,  i.e. sections of $\bigwedge^k T^*N.$ 
\emph{Differential $k$-forms of odd type}, also called \emph{pseudo-forms}, are regular differential forms twisted by the orientation bundle~$o(N)$, i.e. sections of $ \bigwedge^k T^*N \,\otimes \, o(N)$. In more concrete terms, odd forms can be defined as follows:
\begin{definition}
An \emph{odd $k$-form} $\alpha$ on a vector space $V$ assigns to each orientation ${\mathcal O}$ of $V$ an exterior $k$-form $\alpha_{{\mathcal O}}$ such that if the orientation is reversed the exterior form is replaced by its negative:
$\alpha_{-{\mathcal O}}=-\alpha_{\mathcal O}.$
An odd differential $k$-form on a manifold~$N$ assigns an odd $k$-form $\alpha$ to each tangent space $T_x N$ in a smooth fashion.
Non-vanishing odd forms of top degree are called \emph{densities}. Odd $0$-forms are called \emph{pseudo-functions}.

Denote the space of even (i.e. usual) $k$-forms on a manifold $N$ by $ \Omega^k(N)$ and odd $k$-forms by $\widetilde \Omega^k(N)$.
\end{definition}

\begin{example}
Let $(N, g)$ be a Riemannian manifold of dimension $n$ (orientable or not). Then we have an associated Riemannian density 
$
\rho_g \in \widetilde \Omega^{n}(N),
$
defined as the unique odd $n$-form
that assigns to an orientation $\mathcal O$ of the tangent space $T_p N$ and an orthonormal basis in $T_p N$ the value $+1$ if the orientation of the basis agrees with $\mathcal O$ and $-1$ otherwise. In terms of local coordinates the density $\rho_g$ has the following expression:
$$\rho_g = \sqrt{\det g}\, \diff x^1\wedge \dots \wedge \diff x^n.$$
(Note that locally we can always write odd forms as usual differential forms, since local coordinates define a local trivialization of the orientation bundle.)
\end{example}

 Assuming $N$ to be non-orientable, there is a one-to-one correspondence between even (respectively, odd) differential forms on $N$ and those differential forms 
on the orientation double cover $\widetilde{ N}$ that are even (respectively, odd) with respect the involution $I\colon \widetilde{ N}\to \widetilde{ N}$:
\begin{gather}\Omega^{k}(N)\simeq \Omega^{k}_{even}(\widetilde N) = \{\omega \in \Omega^k(\widetilde{ N})\colon I^*\omega=\omega\}, \\ \widetilde \Omega^k(N)  \simeq \Omega^{k}_{odd}(\widetilde N) = \{\omega \in \Omega^k(\widetilde{ N})\colon I^*\omega=-\omega\}.\end{gather}

These isomorphisms are provided by pull-backs via the projection $\widetilde N \to N$. Note that although the pull-back of a pseudo-form on $N$ is a pseudo-form on $\widetilde N$, i.e. a section of $ \bigwedge^k T^*\widetilde N \,\otimes \, o(\widetilde N)$, the canonical orientation on $\widetilde N$ gives us a non-vanishing section $\mathbf 1$ of $o(\widetilde N)$ (given by assigning $+1$ to every chart in a positively oriented atlas) and hence an identification  between forms and pseudo-forms given by $\omega \mapsto \omega \otimes \mathbf 1$.
Since the pull-back of a pseudo-form (as well as any other object) on $N$ to $\widetilde N$ is even, and the section $\mathbf 1$ is odd, we get an identification of pseudo-forms on $N$ with odd forms on $\widetilde N$.

\begin{example} 
\label{projective-pseudo-function}
Any homogeneous odd-degree polynomial $P \in \R[x, y, z]$  is an odd function on the unit sphere $S^2$ and hence defines a pseudo-function on the projective plane $\mathbb{RP}^2=S^2\,/\,\Z_2$. Likewise, any homogeneous even-degree polynomial defines a regular function on $\mathbb{RP}^2$.
\end{example}

Note that both even and odd $k$-forms can be integrated over compact $k$-dimensional submanifolds. To integrate an even form we need the submanifold to be oriented.  To integrate an odd form we need the submanifold to be \emph{co-oriented}. In particular, any density can always be integrated over the whole manifold, assuming the manifold is compact (as opposed to a volume form which can only be integrated over an oriented manifold).

\begin{example}
For any (orientable or not) compact Riemannian manifold $(N, g)$ the integral of the associated Riemannian density is well-defined (i.e. it does not depend on the orientation of $N$ or on whether such an orientation even exists). This integral is the Riemannian volume of $N$ and is a positive number.
\end{example}

{ Since, locally, one can identify odd and even forms, all standard local operations with even forms have odd counterparts.}
 In particular, the differential $\diff \alpha$ of an odd $k$-form $\alpha$ is an odd $(k+1)$-form, so that we have a map
$d\colon \widetilde \Omega^k(N) \to \widetilde \Omega^{k+1}(N).$ {Likewise, for any vector field $v$ on $N$ we have the interior product operator $i_v \colon \widetilde \Omega^{k}(N) \to \widetilde \Omega^{k-1}(N)$ and the Lie derivative $L_v = d i_v + i_v d \colon \widetilde \Omega^{k}(N) \to \widetilde \Omega^{k}(N)$. }
The exterior product of two forms of the same parity is an even form, while the product of two forms of different parity is an odd form. The latter, in particular, means that on any closed (i.e. compact without boundary) $n$-dimensional manifold $N$ one has a non-degenerate pairing $\widetilde \Omega^k(N) \times  \Omega^{n-k}(N) \to \R$. The annihilator of the space of exact (respectively, closed) $k$-forms of any given parity with respect to that pairing is the space of closed (respectively, exact) $(n-k)$-forms of opposite parity. This gives \emph{twisted Poincar\'e  duality}
$$
\widetilde H^k(N; \R) \simeq H^{n-k}(N; \R)^*,
$$
where $\widetilde H^{\bullet}(N; \R)$ is the cohomology of the cochain complex $(\widetilde \Omega^\bullet(N), d)$. One also has identifications
$$
H^k(N; \R) \simeq H^k_{even}(\widetilde N; \R), \quad \widetilde H^k( N; \R) \simeq H^k_{odd}(\widetilde N; \R),
$$
where $ H^k_{even}(\widetilde N; \R)$ and $H^k_{odd}(\widetilde N; \R)$ are, respectively $+1$ and $-1$ eigenspaces for the action of the orientation-reversing involution on the cohomology $H^k(\widetilde N; \R)$ of the orientation double cover. In those terms twisted Poincar\'e duality rewrites as 
$$
 H^k_{even}( \widetilde N; \R) \simeq H^{n-k}_{odd}(\widetilde N; \R)^*.
$$

\subsection{The group of measure-preserving diffeomorphisms, its Lie algebra, and the dual of the Lie algebra}
\label{section:diffeo}

Let $N$ be a closed (i.e. compact without boundary) non-orientable manifold with density (i.e. a non-vanishing top-degree pseudo-form) $\rho$. The group $\Diff_\rho(N)$ consists of measure-preserving diffeomorphisms: 
$$\Diff_\rho(N):=\{\Phi\in\Diffeo(N)\mid  \Phi^*\rho=\rho\}.$$
Let $\widetilde{ N}$ be the orientation double cover of $N$. Then $\Diff_\rho(N)$
can be identified with the subgroup of $\Diff_\mu(\widetilde{ N})$  consisting of volume-preserving diffeomorphisms of the orientation double cover that commute with the orientation-reversing involution (the volume form $\mu$ on $\widetilde N$ is constructed as a pull-back of the density $\rho$).

The Lie algebra $\Vect_\rho(N)$ of the group $\Diff_\rho(N)$ consists of divergence-free vector fields:
$$\Vect_\rho(N):=\{v \in\vect(N)\mid L_v \rho = 0\}.$$
With any (even) $1$-form $\alpha \in \Omega^1(N)$ we can associate a linear functional $\ell_\alpha$ on the Lie algebra $\Vect_\rho(N)$ given by 
$$
\ell_\alpha(v) := \int_N (i_v \alpha) \rho
$$
(recall that the integral of a density over the whole manifold is well-defined).
\begin{definition}
A linear functional $\ell\colon \Vect_\rho(N)\to \R$ is called \emph{regular} if there exists $\alpha \in \Omega^1(N)$ such that 
$\ell= \ell_\alpha$. 
Denote the space of regular functionals by $\Vect_\rho^*(N)$.
\end{definition}

\begin{proposition}
The space of regular functionals $\Vect_\rho^*(N)$ is isomorphic to the space of cosets $\Omega^1(N)\,/\,\diff \Omega^0(N)$ of $1$-forms modulo exact $1$-forms.
\end{proposition}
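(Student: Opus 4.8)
The plan is to analyze the linear map $\alpha \mapsto \ell_\alpha$ from $\Omega^1(N)$ to the space of linear functionals on $\Vect_\rho(N)$. By the very definition of $\Vect_\rho^*(N)$, this map is surjective onto $\Vect_\rho^*(N)$, so by the first isomorphism theorem the claim reduces to identifying its kernel; concretely, I will show that $\ell_\alpha = 0$ if and only if $\alpha \in \diff\Omega^0(N)$.

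The first step is to reparametrize the Lie algebra in terms of forms. Since $\rho$ is a non-vanishing density, contraction $v \mapsto i_v\rho$ is a pointwise isomorphism and hence a bijection between (even) vector fields $\vect(N)$ and odd $(n-1)$-forms $\widetilde\Omega^{n-1}(N)$. As $\rho$ has top degree, $\diff\rho = 0$, so $L_v\rho = \diff i_v\rho + i_v\diff\rho = \diff i_v\rho$; thus $v$ is divergence-free precisely when $i_v\rho$ is closed. This restricts the bijection above to one between $\Vect_\rho(N)$ and the space of closed odd $(n-1)$-forms. The next step is to rewrite the functional in a form amenable to duality: because $\alpha\wedge\rho$ vanishes for degree reasons, the Leibniz identity $i_v(\alpha\wedge\rho) = (i_v\alpha)\rho - \alpha\wedge i_v\rho$ yields $(i_v\alpha)\rho = \alpha\wedge i_v\rho$, so that $\ell_\alpha(v) = \int_N \alpha\wedge i_v\rho$. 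Combining this with the reparametrization, $\ell_\alpha = 0$ means exactly that $\int_N \alpha\wedge\beta = 0$ for every closed odd $(n-1)$-form $\beta$; that is, $\alpha$ lies in the annihilator of the closed odd $(n-1)$-forms with respect to the pairing $\widetilde\Omega^{n-1}(N)\times\Omega^1(N)\to\R$.

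The two inclusions then follow. For the easy direction, if $\alpha = \diff f$ then, using $L_v\rho = 0$ and $\diff(f\rho) = 0$, one computes $(i_v\alpha)\rho = (L_v f)\rho = L_v(f\rho) = \diff\bigl(i_v(f\rho)\bigr)$, and integrating over the closed manifold $N$ and applying Stokes' theorem gives $\ell_{\diff f} = 0$. For the converse, which is the main point, I invoke the annihilator statement obtained from twisted Poincar\'e duality in Section~\ref{section:dif_forms}: the annihilator of the space of closed $(n-1)$-forms of odd parity is precisely the space of exact $1$-forms of even parity, namely $\diff\Omega^0(N)$. Hence $\ell_\alpha = 0$ forces $\alpha$ to be exact, completing the identification of the kernel.

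The step I expect to require the most care is the parity bookkeeping throughout: since the non-degenerate pairing and its annihilator description only hold between forms of \emph{opposite} type, one must verify at each stage which factor is even and which is odd, and confirm that $v\mapsto i_v\rho$ lands surjectively onto the \emph{odd} closed $(n-1)$-forms. All the genuinely analytic input---non-degeneracy of the pairing and the annihilator identity---is supplied by the preceding subsection, so once the parities are tracked correctly the argument is essentially a faithful translation of $\ell_\alpha$ into the language of that pairing.
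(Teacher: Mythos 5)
Your proof is correct and follows essentially the same route as the paper's: both reduce the claim to identifying the kernel of $\alpha \mapsto \ell_\alpha$, rewrite $\ell_\alpha(v) = \int_N \alpha \wedge i_v\rho$, observe that $i_v\rho$ ranges over all closed odd $(n-1)$-forms as $v$ ranges over $\Vect_\rho(N)$, and conclude via the annihilator statement from twisted Poincar\'e duality. You simply spell out the intermediate steps (the Leibniz identity and the bijection $v \mapsto i_v\rho$) that the paper leaves implicit.
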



\begin{proof}
By definition of a regular functional, we have a surjective vector space homomorphism 
$\Omega^1(N) \to \Vect_\rho^*(N)$ given by
$
\alpha \mapsto \ell_\alpha.
$
We need to show that its kernel is the space of exact (even) $1$-forms. To that end, 
rewrite $\ell_\alpha(v)$ as
$$
\ell_\alpha(v) = \int_N \alpha \wedge i_v \rho.
$$
Since $v$ is an arbitrary divergence-free vector field, $i_v \rho$ is an arbitrary closed odd $(n-1)$-form. So, the kernel of the map $
\alpha \mapsto \ell_\alpha
$ is the annihilator of closed  odd $(n-1)$-forms under the pairing $\Omega^1(N) \times \widetilde \Omega^{n-1}(N) \to \R$, i.e. the space of exact even $1$-forms, as needed.

\end{proof}

The coadjoint action of the group $\Diff_\rho(N)$ on the regular dual  $\Vect_\rho(N)^* = \Omega^1(N)\,/\,\diff\Omega^0(N)$  coincides with the natural action of diffeomorphisms on (cosets of) $1$-forms:
$$\Ad_\Phi^*[\alpha]=[\Phi^*\alpha],$$
where $\Phi\in\Diff_\rho(N)$ is a measure-preserving diffeomorphism and $\alpha \in\Omega^1(N)$ is a 1-form. 
The focus of the present paper is on classifying all generic orbits of that action. Note that those orbits 
can also be interpreted as symplectic leaves of 
the  Lie-Poisson structure on $\Vect_\rho^*(N)$.

\section{Classification of generic pseudo-functions in 2D}
\label{section:func}

\subsection{Simple Morse pseudo-functions}
We begin with recalling standard definitions of a Morse function and a simple Morse function.
\begin{definition}
Let $N$ be a smooth manifold. A {smooth} function $F \colon N \to \R$ is called a \emph{Morse function}
if all its critical points are non-degenerate. A Morse function $F \colon N \to \R$ is \emph{simple} if all its critical values are distinct.
\end{definition}
Below we formulate analogous notions for pseudo-functions.

\begin{definition} 
Let $N$ be a non-orientable manifold. \begin{itemize} \item A pseudo-function 
$
F \in \widetilde \Omega^0(N)
$
is \emph{Morse} if its lift $ \widetilde F \in \Omega^0_{odd} (\widetilde N)$ to the orientation double cover $\widetilde{ N}$ is a Morse function. \item A pseudo-function 
$
F \in \widetilde \Omega^0(N)
$
is \emph{simple Morse} if its lift $ \widetilde F \in \Omega^0_{odd} (\widetilde N)$ is simple Morse. The latter in particular implies that $0$ is not a critical value of $\widetilde F$ (if there was a critical point where $\widetilde F = 0$, then its image under the orientation-reversing involution would give another critical point at the same zero level).
\end{itemize}

\end{definition}

The space of smooth pseudo-functions $\tilde\Omega^0(N)$
can be identified with the space~$\Omega^0_{odd}(\widetilde N)$ of odd smooth functions
$
\widetilde{ N}\to\R
$.

Consider the $C^2$-topology on the space~$
\widetilde \Omega^0(N) \simeq \Omega^0_{odd}(\widetilde N)
$ 
induced by $C^2$-topology on the space
$
\Omega^0(\widetilde{ N})
$ of all smooth functions on $\widetilde N$.

\begin{theorem}
\label{theorem_simple_pseudo}
Simple Morse pseudo-functions on $N$ form an open and dense subset in the space of smooth pseudo-functions in the $C^2$-topology.

\end{theorem}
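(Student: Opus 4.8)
The plan is to work entirely on the orientation double cover $\widetilde N$, identifying the space of smooth pseudo-functions with the closed linear subspace $\Omega^0_{odd}(\widetilde N) = \{h \in \Omega^0(\widetilde N) : h \circ I = -h\}$ of all smooth functions, equipped with the subspace $C^2$-topology. Since a pseudo-function is simple Morse precisely when its lift is a simple Morse function on $\widetilde N$ in the classical sense, openness comes essentially for free: the set of simple Morse functions on the compact manifold $\widetilde N$ is $C^2$-open (finitely many critical points persist and vary continuously under $C^2$-small perturbations, so nondegeneracy together with the strict inequalities between critical values — including the condition that $0$ is not a critical value — are preserved), and its intersection with $\Omega^0_{odd}(\widetilde N)$ is open in the subspace topology. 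The substance of the theorem is therefore density, which I would establish in two stages: first make the lift Morse, then separate its critical values, in each case perturbing within odd functions only.

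For the Morse stage I would run a parametric transversality (Thom–Sard) argument, the only subtlety being that the admissible perturbations must stay odd. The decisive point is that $I$ is fixed-point-free: for every $x \in \widetilde N$ one has $x \neq I(x)$, so there is a neighborhood $U \ni x$ with $U \cap I(U) = \emptyset$, and for any $\psi$ supported in $U$ the function $\phi := \psi - I^*\psi$ is odd and coincides with $\psi$ near $x$. Hence arbitrary $1$-jets at $x$ are realized by odd functions. Using this together with compactness, I would choose finitely many odd functions $\phi_1, \dots, \phi_m$ whose differentials $\diff \phi_1, \dots, \diff \phi_m$ span $T_x^*\widetilde N$ at every $x$, and consider the family $G_a = \widetilde F + \sum_i a_i \phi_i$, $a \in \R^m$, which again consists of odd functions. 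The evaluation map $(x,a) \mapsto \diff G_a(x) \in T^*\widetilde N$ is then a submersion — its $a$-derivative alone surjects onto each cotangent fibre, by the spanning property — hence transverse to the zero section; by parametric transversality, for almost every $a$ the section $x \mapsto \diff G_a(x)$ is transverse to the zero section, i.e. $G_a$ is Morse. Choosing such an $a$ arbitrarily close to $0$ yields a Morse odd function $C^2$-close to $\widetilde F$.

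It remains to separate critical values by a further odd perturbation, and here the $I$-symmetry imposes a genuine constraint. A Morse odd function $\widetilde F$ has finitely many critical points, grouped into pairs $\{p_j, I(p_j)\}$, $j = 1, \dots, k$, with critical values $c_j$ and $-c_j$ respectively ($I$ being fixed-point-free, the two points of a pair are distinct, and oddness forces the values to be antipodal). Given prescribed reals $s_1, \dots, s_k$, the odd function $\phi = \psi - I^*\psi$ with $\psi(p_j) = s_j$ and $\psi(I(p_j)) = 0$ satisfies $\phi(p_j) = s_j$ and $\phi(I(p_j)) = -s_j$; and since the $p_j$ are nondegenerate critical points, the first-order change of the critical value at $p_j$ under $\widetilde F \mapsto \widetilde F + \phi$ equals $\phi(p_j) = s_j$. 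Thus the map sending a small odd perturbation to the $k$-tuple $(c_1, \dots, c_k)$ of critical values is, near the origin, a local diffeomorphism, so its image contains every $k$-tuple sufficiently close to $(c_1, \dots, c_k)$. I would then pick a target in general position — the $c_j$ distinct, nonzero, and with $c_j + c_l \neq 0$ for all $j,l$ — which guarantees that the full collection $\{c_j\} \cup \{-c_j\}$ consists of $2k$ distinct nonzero numbers, i.e. that the perturbed pseudo-function is simple Morse; such targets are dense, so the perturbation is $C^2$-small. Combining the two stages yields density.

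The main obstacle throughout is being confined to the subspace of odd functions rather than free to perturb arbitrarily, so the classical genericity machinery does not apply verbatim. The entire argument hinges on the fact that the covering involution $I$ has no fixed points, which is exactly what allows odd perturbations to be prescribed locally (via $\psi - I^*\psi$) as freely as unconstrained ones; this single observation reduces both the transversality step and the value-separation step to their standard counterparts. Had $I$ possessed fixed points, oddness would force $\widetilde F$ to vanish there, and the analysis near such points would require separate treatment.
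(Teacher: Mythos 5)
Your proof is correct, and it takes a genuinely different route from the paper's in the first (Morse-density) stage while essentially coinciding with it in the second. The paper first establishes an embedding lemma (Lemma~\ref{lemma_mfd_involution}): any fixed-point-free involution of a compact manifold can be realized as the restriction of the antipodal map under an embedding $\widetilde N \hookrightarrow V$; the perturbations $F_\ell = f + \ell$ by linear functionals $\ell \in V^*$ are then automatically odd, and the classical theorem that almost every linear perturbation of a function on an embedded submanifold is Morse finishes the step. You instead run a parametric transversality argument directly on $\widetilde N$, manufacturing a finite spanning family of odd perturbations from the observation that near any $x$ one can take $U$ with $U \cap I(U) = \emptyset$, so that $\psi - I^*\psi$ realizes arbitrary local $1$-jets by odd functions. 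Both arguments hinge on the same fact --- $I$ has no fixed points --- but yours trades the global embedding construction for local antisymmetrization plus the Thom parametric transversality theorem, and in particular renders the embedding lemma unnecessary for this theorem (at the cost of invoking slightly heavier transversality machinery than the off-the-shelf ``generic linear functional'' statement the paper cites). For the second stage the paper merely remarks that the classical critical-value-separation argument is local and that pseudo-functions are locally functions; your version spells out what that locality amounts to on the double cover, correctly noting that critical points come in $I$-pairs with antipodal values and that the target tuple must satisfy $c_j \neq \pm c_l$ for $j \neq l$ and $c_j \neq 0$ so that all $2k$ lifted critical values are distinct --- which is exactly the content compressed into the paper's appeal to locality.
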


The proof is based on the following version of Whitney's embedding theorem:

\begin{lemma}[On the realization of a fixed-point-free involution as the antipodal map]
\label{lemma_mfd_involution}
Let $M$ be a compact smooth $n$-dimensional manifold equipped with a fixed-point-free involution $I\colon M\to M.$ Then there exists an embedding $\Phi \colon M \to V$, where $V$ is a vector space of dimension $2n(2n+1)$, such that the following diagram commutes: 

\begin{equation}\label{comDiag}
\centering
\begin{tikzcd}
M \arrow{r}[]{\Phi}  \arrow{d}{I}&  V   \arrow{d}[]{-\id}\\
M \arrow{r}{\Phi}& V,
\end{tikzcd}
\end{equation}
where $-\id \colon V \to V$ is the antipodal map $x \mapsto -x$. In other words, any fixed-point-free involution of a compact manifold can be realized as restriction of the antipodal map.

\end{lemma}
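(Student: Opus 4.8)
The plan is to reformulate the statement as the existence of an \emph{odd} embedding and then build one explicitly as a tensor product of an even map and an odd map. An embedding $\Phi\colon M\to V$ making diagram \eqref{comDiag} commute is precisely an embedding all of whose components are odd functions, i.e. functions $f$ with $f\circ I=-f$; conversely, any such map automatically satisfies $\Phi\circ I=-\Phi$. The key observation is that odd functions are plentiful but, on their own, they only separate points lying in the \emph{same} $I$-orbit (by a sign), whereas separating distinct orbits is naturally a problem about the quotient $N:=M/I$. I would therefore produce $\Phi$ as $\Phi=u\otimes w$, where $u\colon M\to \R^{2n+1}\setminus\{0\}$ is \emph{even} and descends to a directional (projective) embedding of $N$, and $w\colon M\to\R^{2n}\setminus\{0\}$ is \emph{odd} and nowhere zero. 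Since $u\circ I=u$ and $w\circ I=-w$, the product $\Phi=u\otimes w\colon M\to \R^{2n+1}\otimes\R^{2n}$ is odd, and $\dim(\R^{2n+1}\otimes\R^{2n})=2n(2n+1)=\dim V$, as required.

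To construct $w$, apply the Whitney embedding theorem to embed the compact $n$-manifold $M$ into $\R^{2n}$ by some $h$, and set $w:=h-h\circ I$; this is odd, and it is nowhere zero because $w(x)=0$ would force $h(x)=h(I(x))$, hence $x=I(x)$, contradicting freeness of $I$. To construct $u$, note that $N=M/I$ is a compact $n$-manifold and $\pi\colon M\to N$ is a double cover; embed $N$ into $\R^{2n}$ by some $g$ and set $u:=(1,g\circ\pi)\colon M\to\R^{2n+1}$. Then $u$ is even (it factors through $\pi$), nowhere zero (its first coordinate is $1$), and its direction determines the point: $[u(x)]=[u(y)]$ in $\RP^{2n}$ forces $u(x)=u(y)$ (normalize by the first coordinate), hence $g(\pi(x))=g(\pi(y))$ and $\pi(x)=\pi(y)$.

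For injectivity of $\Phi$, suppose $\Phi(x)=\Phi(y)$. Both are nonzero rank-one tensors (as $u,w$ never vanish), so $u(x)\parallel u(y)$; by the previous paragraph this gives $\pi(x)=\pi(y)$, i.e. $y\in\{x,I(x)\}$. If $y=I(x)$ then $u(y)=u(x)$ while $w(y)=-w(x)$, whence $\Phi(y)=-\Phi(x)$; combined with $\Phi(x)=\Phi(y)$ this forces $\Phi(x)=0$, a contradiction. Hence $y=x$. For the immersion property, write elements of $\R^{2n+1}\otimes\R^{2n}$ as matrices and compute $d\Phi_x(\xi)=du_x(\xi)\,w(x)^{\mathsf T}+u(x)\,dw_x(\xi)^{\mathsf T}$. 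If this vanishes, then reading off a column $i$ with $w(x)_i\neq0$ gives $du_x(\xi)\in\langle u(x)\rangle$; since the first coordinate of $u$ is constant and $d(g\circ\pi)_x$ is injective (as $\pi$ is a local diffeomorphism and $g$ an embedding), this forces $\xi=0$. Thus $\Phi$ is an injective immersion, and since $M$ is compact it is an embedding.

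The main obstacle is verifying injectivity and the immersion property \emph{simultaneously} for the product map: oddness alone cannot separate distinct $I$-orbits, which is why the even factor $u$ (a projective embedding of the quotient) is indispensable, while the odd factor $w$ is what both enforces the antipodal symmetry and distinguishes the two points within a single orbit. A secondary bookkeeping point is that landing in exactly dimension $2n(2n+1)$ requires the sharp Whitney bound $\R^{2n}$ for both $M$ and $N$, together with the one-coordinate normalization that realizes the quotient embedding in $\RP^{2n}$ using only $2n+1$ homogeneous coordinates.
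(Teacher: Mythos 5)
Your proof is correct, and it reaches the same target space dimension $2n(2n+1)$ by a genuinely different decomposition. The paper never mentions the quotient $M/I$: it embeds $M$ into $W=\R^{2n}$ by Whitney, forms $\Phi_2(x)=(\Phi_1(x),\Phi_1(I(x)))$ so that $I$ becomes the coordinate swap $P(x,y)=(y,x)$ on $W\oplus W$ (with image avoiding the diagonal because $I$ is free), and then composes with the universal map $\Psi(x,y)=(x-y,\,(x-y)\otimes(x+y))$, which intertwines $P$ with $-\id$ and is an injective immersion off the diagonal. Unwinding, the paper's map is essentially $w\otimes(1,\,h+h\circ I)$ with $w=h-h\circ I$, i.e. it too is a tensor product of an odd nowhere-vanishing factor with an even factor — but its even factor is manufactured from the \emph{same} Whitney embedding of $M$ by orbit-averaging, whereas you invoke a second Whitney embedding of the quotient manifold $N=M/I$ and a projective normalization to separate orbits. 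What the paper's route buys is that all verifications reduce to one clean general fact about $\Psi$ away from the diagonal (injectivity and immersivity are two-line computations there), and no smooth structure on the quotient is needed; what your route buys is a more transparent separation of roles — the even factor $u$ visibly encodes the orbit via an embedding of $N$ in $\RP^{2n}$, and the odd factor $w$ visibly distinguishes the two points of each orbit — at the cost of the hands-on rank-one-tensor and column-by-column differential arguments, which you carry out correctly. Both arguments use the strong Whitney bound $\R^{2n}$, consistent with the paper, and the paper's subsequent remark makes clear the exact dimension is immaterial anyway.
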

\begin{remark}
Although the exact dimension of the vector space $V$ is irrelevant for our purposes, we note that    composing the embedding given by Lemma~\ref{lemma_mfd_involution} with a projection onto a suitable subspace, one can bring down the dimension of $V$ to $2n+1$ (as in weak Whitney's theorem).
\end{remark}

\begin{proof}[Proof of Lemma~\ref{lemma_mfd_involution}]
By Whitney's theorem there exists an embedding 
$\Phi_1 \colon M\to W$, where $W$ is a vector space of dimension $2n$. 

We extend it to an embedding 
$$\Phi_2 \colon M\to W
\oplus W,
\quad
\Phi_2(x) := (\Phi_1(x) , \Phi_1(I(x))).
$$
Then 
the following diagram commutes:
\begin{equation}\label{comDiag2}
\centering
\begin{tikzcd}
M \arrow{r}[]{\Phi_2}  \arrow{d}{I}&  W \oplus W    \arrow{d}[]{P}\\
M \arrow{r}{\Phi_2}& W \oplus W
\end{tikzcd}
\end{equation}
where the involution $P$ on $W \oplus W$ is  $(x,y)\mapsto (y,x).$ Furthermore, since the involution~$I$ is fixed-point-free, the image $\Phi_2(M)$ does not intersect the diagonal
$\Delta  \subset W\oplus W.$ Now, consider the map $$\Psi \colon W 
\oplus W \to W \oplus (W \otimes W), \quad
\Psi(x,y) := (x-y, (x-y) \otimes (x+y)).
$$
It is easy to see that $\Psi$ is an injective immersion away from the diagonal $\Delta$. Therefore, since $M$ is compact, the map $ \Psi \circ \Phi_2 \colon M \to W \oplus (W \otimes W)$ is an embedding.
Furthermore, we have $\Psi\circ P=-\Psi,$ so the following diagram commutes:
\begin{equation}\label{comDiag5}
\centering
\begin{tikzcd}
M \arrow{r}[]{\Phi_2}  \arrow{d}{I}&  W \oplus W    \arrow{d}[]{P} \arrow{r}[]{\Psi} & W \oplus (W \otimes W) \arrow{d}[]{-\id} \\
M \arrow{r}{\Phi_2}& W \oplus W \arrow{r}[]{\Psi} & W \oplus (W \otimes W).
\end{tikzcd}
\end{equation}
Taking $V:= W \oplus (W \otimes W)$ and $\Phi := \Psi \circ \Phi_2$ completes the proof.\qedhere

\end{proof}

\begin{proof}[Proof of Theorem~\ref{theorem_simple_pseudo}]
    It is clear that simple Morse pseudo-functions on $N$ form an open subset in the space of all smooth pseudo-functions, so we only need to show that it is dense. First we show that the set of all (not necessarily simple) Morse pseudo-functions is dense. That is equivalent to showing that any odd function $f \in \Omega^0_{odd}(\widetilde N)$ can be approximated by an odd Morse function. Thanks to  Lemma~\ref{lemma_mfd_involution}, we can assume that $\widetilde N$ is embedded in a vector space $V$, with the orientation-reversing involution given by the antipodal map $x \mapsto -x$.  
    Take an odd function $f  \in \Omega^0_{odd}(\widetilde N)$, and consider the family of functions
    $
    F_\ell := f + \ell
    $
    where $\ell \in V^*$.    Notice that all functions in this family are odd with respect to the antipodal map. 
    Furthermore, almost all functions in such a family are Morse \cite[Theorem 8.1.1]{shastri2011elements}, so one can indeed find an odd Morse function arbitrarily close to $f.$
    
    The second step is to show that simple Morse pseudo-functions on $N$ are dense in the space of all Morse pseudo-functions. To that end, notice that the corresponding argument for functions \cite[Proposition~1.2.12]{nicolaescu2007invitation} is \emph{local}: one can perturb a given Morse function in the neighborhood of a critical point to slightly change its critical value. But locally one can trivialize the orientation bundle $o(N)$ and thus identify pseudo-functions with functions. Therefore, the same argument works for pseudo-functions, proving the theorem.\qedhere
   
\end{proof}

\subsection{Reeb graphs with involution}

Below we classify simple Morse pseudo-functions on closed connected non-orientable 2D surfaces up to area-preserving diffeomorphisms. This is equivalent to symplectic classification of quadruples $(M,I, \omega, F)$ where \begin{itemize}
    \item $M$ is a closed connected surface with symplectic form $\omega$;
    \item $I\colon M\to M$ is a fixed-point-free anti-symplectic involution;
    \item $F\in \Omega^0_{odd}(M)$ is a simple Morse function anti-symmetric under the action of $I$.
\end{itemize}

Consider for a moment an arbitrary simple Morse function $F$ on a closed connected orientable surface $M$. Define an \emph{$F$-level} as a connected component of a level set of~$F.$ Non-critical $F$-levels are diffeomorphic to circles. The surface $M$ is a union of $F$-levels, which form a foliation with singularities. 
The base space of that foliation with the quotient topology is homeomorphic to a finite connected graph $\Gamma_F$ (see Figure~\ref{torusInt} for a Morse function $F$ on a torus) whose vertices correspond to critical values of $F.$ 
This graph $\Gamma_F$ is called the \emph{Reeb graph} 
of the function $F$ (also called the \emph{Kronrod graph}~\cite{Reeb, Kronrode}). By $\pi$ we denote the projection $M\to \Gamma_F.$
The function $F$ on $M$ descends to a function $f$ on the Reeb graph $\Gamma_F$. It is also convenient
to assume that $\Gamma_F$ is oriented: edges are oriented in the direction of increasing $f.$ 
One can recover the topology of a closed connected orientable surface from its Reeb graph by the following formula:
$
b_1(M) = 2b_1(\Gamma_F),
$
where $b_1(.) := \dim H_1(.)$ is the first Betti number. 

Now return to consideration of a simple Morse function $F$ which is anti-symmetric under the involution $I$. 

In that setting $I$ descends to an involution $\iota\colon \Gamma_F\to \Gamma_F$ such that $\iota^*f = -f$ (see Figure~\ref{torus}). Note that the involution $\iota$ is not necessarily fixed-point-free.

\begin{definition}\label{def:Reeb_graph}
    A \emph{Reeb graph with involution} $(\Gamma, \iota, f)$ is an oriented connected finite graph $\Gamma$ equipped with an involution 
    $
    \iota\colon \Gamma \to \Gamma
    $
    and a continuous function 
    $
    f:\Gamma \to \R,
    $ 
    with the following properties:
\begin{enumerate}[label=(\roman*)]
\item Each vertex of $\Gamma$ is either $1$-valent or $3$-valent;
\item The function $f$ is anti-invariant under the action of $\iota$ i.e. 
$ \iota^*f = -f;$
\item $f$ is strictly increasing along each edge of $\Gamma$;
\item $f$ takes distinct non-zero values at vertices of $\Gamma$.
\end{enumerate}
\end{definition}

\subsection{Measured Reeb graphs with involution}
In this section we discuss the notion of a \emph{measured} Reeb graph with involution. 
Recall that we consider quadruples $(M,I, \omega, F)$ where $(M, \omega)$ is a  closed connected symplectic surface, $I\colon M\to M$ is a fixed-point-free anti-symplectic involution, and  $F \in \Omega^0_{odd}(M)$ is a simple Morse function anti-symmetric under the action of $I$.

The natural projection map $\pi \colon M \to \Gamma_F$ induces a measure $\mu:=\pi_*\omega$ on the graph $\Gamma_F.$ 
According to \cite[Proposition 3.4]{izosimov2016coadjoint}, measure $\mu$ is \emph{log-smooth}  in the sense of \cite[Definition 3.5]{izosimov2016characterization} (in a nutshell, this means that the measure is smooth at interior points of all edges as well as at $1$-valent vertices, while at 3-valent vertices it has logarithmic singularities). Furthermore, this measure is invariant with respect to the involution 
$
\iota\colon \Gamma_F\to \Gamma_F.
$

\begin{definition}\label{def:measured_Reeb_graph}
    A \emph{measured Reeb graph with involution} $(\Gamma, \iota, f,\mu)$ is a Reeb graph $(\Gamma, \iota, f)$ with involution equipped with a log-smooth measure $\mu$ invariant under the involution $\iota$.
\end{definition}
\begin{definition}
    A map $\phi:\Gamma_1\to\Gamma_2$ between  two measured Reeb graphs with involution $(\Gamma_1, \iota_1, f_1,\mu_1)$ and $(\Gamma_2, \iota_2, f_2,\mu_2)$ is an  \emph{isomorphism} if it is an isomorphism of topological graphs which maps all objects in $\Gamma_1$ to the corresponding objects in $\Gamma_2$, i.e. $\phi \circ \iota_1 =
 \iota_2 \circ \phi$, $\phi^*f_2 = f_1$, and $\phi_*\mu_1 = \mu_2$.

\end{definition}
The following definition from \cite{izosimov2016coadjoint} makes sense regardless of the presence of an involution:

\begin{definition} \label{def:compatible}
 A measured Reeb graph $(\Gamma, \iota, f, \mu)$ is \emph{compatible} with a symplectic surface $(M, \omega)$ if 
$$
 2b_1(\Gamma) = b_1(M), \quad \int_\Gamma \diff \mu=\int_M\omega
$$ (where $b_1(.)$ stands for the first Betti number).

\end{definition}

For a non-orientable surface $N$ with density $\rho \in \widetilde \Omega^2(N)$, we say that 
 a measured Reeb graph $(\Gamma, \iota, f, \mu)$ is \emph{compatible} with $(N, \rho)$ if it is compatible with $(\widetilde N, I, \omega)$, where $\widetilde N$ is the orientation double cover of $N$, the symplectic form $\omega \in \Omega^2(\widetilde N)$ is the pull-back of $\rho$, and $I$ is involution on $\widetilde N$ such that $\widetilde N \, / \, I = N$. More explicitly, the compatibility conditions can be stated as follows:
 \begin{definition} \label{def:compatible2}
 A measured Reeb graph $(\Gamma, \iota, f, \mu)$ is \emph{compatible} with a non-orientable surface $N$ equipped with a {density} $\rho$ if 
 $$
b_1(\Gamma) = b_1(N), \quad 
\int_\Gamma \diff \mu=2\int_N\rho.$$
\end{definition}
\subsection{Proof of the classification theorem}

Here we prove the following classification result for {\it pseudo-functions on  non-orientable surfaces}:

\begin{theorem}\label{classification_of_pseudo-functions_symplectic}  
Let $N$ be a closed connected non-orientable surface equipped with a density (non-degenerate pseudo-form of top degree) $\rho$. Then there is a one-to-one correspondence between simple Morse pseudo-functions on $N$, considered up to area-preserving diffeomorphisms, and isomorphism classes of measured Reeb graphs with involution compatible with $(N,\rho)$.
\end{theorem}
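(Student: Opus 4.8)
The plan is to lift the entire problem to the orientation double cover and reduce it to the orientable classification of \cite{izosimov2016coadjoint, izosimov2016characterization}, upgraded so as to respect the involution. By the identifications of Section~\ref{section:geometry}, a simple Morse pseudo-function $F$ on $(N,\rho)$ is the same datum as a simple Morse function $\widetilde F$ on the oriented surface $(\widetilde N, \omega)$ that is anti-invariant under the fixed-point-free anti-symplectic involution $I$ (here $\omega$ is the pull-back of $\rho$), and an area-preserving diffeomorphism of $N$ is the same as an $I$-equivariant symplectomorphism of $\widetilde N$. Thus the theorem is equivalent to the statement that $I$-equivariant symplectomorphism classes of quadruples $(\widetilde N, I, \omega, \widetilde F)$ are in bijection with isomorphism classes of compatible measured Reeb graphs with involution.

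First I would check that the assignment $F \mapsto (\Gamma_{\widetilde F}, \iota, f, \mu)$ is well defined on equivalence classes. That this quadruple is a measured Reeb graph with involution is essentially recorded above: the measure $\mu = \pi_*\omega$ is log-smooth by \cite[Proposition 3.4]{izosimov2016coadjoint} and $\iota$-invariant, the induced $\iota$ satisfies $\iota^* f = -f$, and the compatibility identities $b_1(\Gamma) = b_1(N)$, $\int_\Gamma \diff\mu = 2\int_N\rho$ follow from $b_1(\widetilde N) = 2 b_1(N)$ together with the orientable compatibility $2b_1(\Gamma) = b_1(\widetilde N)$ and $\int_{\widetilde N}\omega = 2\int_N\rho$. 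Functoriality of the Reeb construction then shows that an $I$-equivariant symplectomorphism carrying $\widetilde F_1$ to $\widetilde F_2$ descends to an isomorphism of graphs commuting with the involutions and matching $f$ and $\mu$; hence the invariant depends only on the equivalence class.

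For surjectivity (realization) I would start from a compatible measured Reeb graph with involution $(\Gamma, \iota, f, \mu)$ and first invoke the orientable realization theorem to produce a symplectic surface $(\widetilde N, \omega)$ with a simple Morse function $\widetilde F$ whose underlying measured Reeb graph (forgetting $\iota$) is $(\Gamma, f, \mu)$. It then remains to realize the graph involution $\iota$ by a genuine fixed-point-free anti-symplectic involution $I$ on $\widetilde N$ with $I^*\widetilde F = -\widetilde F$; this is assembled over $\Gamma$ edge by edge and vertex by vertex, the conditions $\iota^* f = -f$ and $\iota_*\mu = \mu$ being exactly what is needed to match the local symplectic annulus models on opposite sides of the zero level. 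Because $b_1(\Gamma) = b_1(N)$ and closed non-orientable surfaces are classified up to diffeomorphism by their first Betti number, the quotient $\widetilde N / I$ is diffeomorphic to $N$, and $\widetilde F$ descends to a pseudo-function realizing the given graph.

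The heart of the argument is injectivity, and this is where I expect the main obstacle. Suppose $\widetilde F_1, \widetilde F_2$ have isomorphic measured Reeb graphs with involution via $\phi$. Applying the orientable classification to the underlying measured Reeb graphs yields a symplectomorphism $\Psi\colon \widetilde N_1 \to \widetilde N_2$ with $\Psi^*\widetilde F_2 = \widetilde F_1$, $\Psi^*\omega_2 = \omega_1$, inducing $\phi$ on graphs — but $\Psi$ need not commute with the involutions. Setting $\Theta := \Psi^{-1} I_2 \Psi$ produces a second fixed-point-free anti-symplectic involution of $\widetilde N_1$ that preserves $\widetilde F_1$ up to sign and induces the same graph involution $\iota_1$ as $I_1$. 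The crux is then a conjugation lemma: any two such involutions are conjugate by a \emph{fiberwise} symplectomorphism $H$ of $\widetilde N_1$ — one preserving $\widetilde F_1$ and $\omega_1$ and inducing the identity on $\Gamma$ — after which $\Psi H^{-1}$ is $I$-equivariant and descends to the required area-preserving diffeomorphism of $N$. Proving this lemma is the delicate step. Over the interiors of edges and at generic vertices $I_1$ and $\Theta$ act freely and the quotient models coincide, so local equivariant normal-form and Moser arguments produce the conjugating map. The genuinely new difficulty occurs along the zero level $f = 0$, i.e. at the fixed points of $\iota$, where the involution preserves an $\widetilde F_1$-level circle and acts on it freely like the antipodal map; there one needs an equivariant normal form near an invariant circle and must verify that the local conjugations glue into a single fiberwise $H$ compatible with both the involution and the measure. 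That no residual invariant obstructs this gluing is precisely what makes the Reeb graph with involution a \emph{complete} invariant for pseudo-functions, in contrast to the coadjoint-orbit classification of Theorem~\ref{thm:sdiffN}, where the leftover fiberwise freedom is recorded by the circulation.
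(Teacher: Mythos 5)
Your overall strategy --- lift to the double cover, apply the orientable classification of \cite{izosimov2016coadjoint}, then correct the resulting symplectomorphism by a fiberwise (shear) map so that it commutes with the involution --- is exactly the route the paper takes. But the statement you isolate as ``the crux,'' the conjugation lemma along the zero level, is precisely where the real content lies, and you have not supplied the argument; saying that ``no residual invariant obstructs this gluing \dots is precisely what makes the Reeb graph with involution a complete invariant'' is circular, since that is the assertion being proved. Concretely: writing the defect $\Psi = I\circ\Phi^{-1}\circ I\circ\Phi$ as a shear $S_\psi$ along the Hamiltonian flow of $F$ over the zero-level edges $E$, one must solve $\xi - \iota^*\xi = \psi$ with $\xi$ vanishing near the lower endpoints of $E$ so that the corrected map extends to all of $M$. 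This is possible if and only if $\psi$ can be chosen \emph{odd}, and the obstruction to that is the integer-valued function $\lambda = (\psi + \iota^*\psi)/t \in H^0(E;\Z)$ (with $t$ the period function), which must lie in the image of $\eta \mapsto \eta + \iota^*\eta$. That image consists of cochains taking \emph{even} values on the $\iota$-fixed edges, and the evenness of $\lambda$ there is not automatic: the paper deduces it (Lemma \ref{oddlemma}) from the observation that a fixed-point-free involution of an invariant level circle commuting with the Hamiltonian flow must be a half-period shift, which forces the commutator $\Psi$ to be the identity on that circle. Without this parity argument your fiberwise $H$ need not exist, so the gap is genuine and sits exactly where you predicted it would.

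There is a second gap on the realization side. The paper does not build a new surface; it constructs, on the given $(M,\omega)$, first an anti-symplectic lift $I_1$ of $\iota$ (which is \emph{not} a priori an involution), then corrects it to an involution $I_2$ by another shear, then removes the fixed circles of $I_2$ lying over $\mathrm{Fix}(\iota)$ by composing with half-period shears (Dehn twists), and finally conjugates the resulting free involution to the given $I$ via Moser's theorem for non-orientable surfaces. Your ``edge by edge'' assembly skips the two genuinely global points here: why the lift of $\iota$ can be made an honest involution, and why it can be made fixed-point-free over the fixed edges of $\iota$ (an anti-symplectic involution preserving an $F$-level circle can just as well act there as a reflection with two fixed points). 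Finally, identifying $\widetilde N / I$ with $N$ merely as smooth surfaces is not enough: you need the identification to carry the induced density to $\rho$, which again requires the non-orientable Moser theorem (applicable because the total areas agree by compatibility).
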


Equivalently, and in more detail, this theorem can be formulated as follows.

\begin{theorem}\label{classification_of_functions_symplectic} 
Let $(M,\omega)$ be a closed connected  symplectic surface together with a fixed-point-free anti-symplectic involution $I\colon M\to M$. Then there is a one-to-one correspondence between odd (i.e. anti-symmetric under $I$) simple Morse functions on $M$, considered up to symplectic diffeomorphisms {which commute with $I$}, and isomorphism classes of measured Reeb graphs with involution compatible with $M$. In other words, the following statements hold.
\begin{longenum}
   \item Let $F,$ $G \in \Omega^0_{odd}(M)$ be two odd simple Morse functions. Then the following conditions are equivalent:
   \begin{longenum}
       \item There exists a symplectic diffeomorphism $\Phi \colon M \to M$ such that { $\Phi \circ I = I\circ\Phi$ and} $\Phi_*F=G$.
       \item Measured Reeb graphs with involution associated with $F$ and $G$ are isomorphic. 
   \end{longenum}
   Moreover, any isomorphism between measured Reeb graphs with involution associated with $F$ and $G$ 
   can be lifted to a symplectic diffeomorphism $\Phi \colon M \to M$ such that { $\Phi \circ I = I\circ\Phi$ and} $\Phi_*F=G$.
   \item For each measured Reeb graph with involution $(\Gamma, \iota, f, \mu) $ compatible  with $(M,\omega)$ there exists an odd simple Morse function $F \in \Omega^0_{odd}(M)$ such that the corresponding measured Reeb graph with involution is isomorphic to $(\Gamma, \iota, f, \mu) $.
\end{longenum}
\end{theorem}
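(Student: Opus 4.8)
The plan is to reduce this equivariant symplectic classification to the non-equivariant theory for orientable surfaces (as in \cite{izosimov2016coadjoint}) and then carefully track the involution. I would structure the proof around the observation that the involution $\iota$ on $\Gamma_F$ comes from the anti-symplectic involution $I$ on $M$, and that the projection $\pi\colon M\to\Gamma_F$ is $I$-equivariant. The key principle is that an $\iota$-equivariant isomorphism of graphs should be realizable by an $I$-equivariant symplectic diffeomorphism, which can be built by working ``one half at a time'' and using $I$ to transport the construction across the zero level.

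First I would establish part (ii), existence. Given a compatible measured Reeb graph with involution $(\Gamma,\iota,f,\mu)$, I would split $\Gamma$ along the level $f=0$ (which contains no vertices by condition (iv)) into the sublevel $\Gamma_{-}=\{f<0\}$ and superlevel $\Gamma_{+}=\{f>0\}$, with $\iota$ interchanging them. On $\Gamma_{+}$ alone the non-equivariant realization theorem of \cite{izosimov2016coadjoint} produces a symplectic surface-with-boundary carrying a Morse function inducing $\Gamma_{+}$ with its measure; I would then \emph{define} the construction on $\Gamma_{-}$ by applying $I$ (equivalently, setting $F:=-F\circ I$), and glue the two halves along the common boundary $\{F=0\}$, which is a disjoint union of circles. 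The gluing must respect the symplectic form and the density of $\mu$ near $f=0$; here the anti-symplectic matching is automatic because $I$ is anti-symplectic and $f$ is odd, so the areas swept on the two sides agree. The compatibility condition $b_1(\Gamma)=b_1(N)$ (equivalently $2b_1(\Gamma)=b_1(\widetilde N)$) guarantees the glued surface has the right topology to be the orientation double cover of $N$.

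Next, for part (i), the equivalence of (a) and (b): the implication (a)$\Rightarrow$(b) is formal, since an $I$-equivariant symplectomorphism descends to a graph isomorphism preserving $f$, $\mu$, and $\iota$. For (b)$\Rightarrow$(a), and the stronger ``lifting'' statement, I would again cut at $f=0$. Given an isomorphism $\phi$ of measured Reeb graphs with involution, it restricts to an isomorphism $\phi_{+}\colon (\Gamma_F)_{+}\to(\Gamma_G)_{+}$ of the positive halves (as measured Reeb graphs of surfaces-with-boundary). The non-equivariant classification lifts $\phi_{+}$ to a symplectomorphism $\Phi_{+}$ between the positive-half surfaces; I would then \emph{extend} $\Phi_{+}$ to the negative half by the formula $\Phi_{-}:=I_G\circ\Phi_{+}\circ I_F$, which is forced by equivariance. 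The only real work is checking that $\Phi_{+}$ and $\Phi_{-}$ agree on the common boundary $\{F=0\}$ and glue to a smooth global symplectomorphism.

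The main obstacle will be exactly this matching along the zero level set. A priori $\Phi_{+}$ and $\Phi_{-}=I_G\Phi_{+}I_F$ need not coincide on $\{F=0\}$, so I would need to choose the boundary behavior of $\Phi_{+}$ with enough freedom to arrange compatibility. The standard tool is that the non-equivariant lifting of \cite{izosimov2016coadjoint} can be performed with prescribed boundary values, and that any symplectomorphism of the circles $\{F=0\}$ extends; the subtlety is that the boundary identification must simultaneously be compatible with $I_F$ and $I_G$ on those circles. Since $I$ acts freely and anti-symplectically, it permutes the zero-level circles without fixed points, so I can choose a fundamental set of circles, prescribe $\Phi_{+}$ there arbitrarily (matching $\phi$), and propagate the definition to the $I$-images by equivariance, thereby making $\Phi_{+}$ and $\Phi_{-}$ automatically agree. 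A final check that the resulting map is a genuine diffeomorphism across $\{F=0\}$ (rather than merely continuous) completes the argument; this is where I expect the most careful local analysis, using a Moser-type deformation in a collar of the zero level to absorb any discrepancy in the symplectic form.
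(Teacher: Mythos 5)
Your strategy --- cut $M$ along $\{F=0\}$, apply the non-equivariant theory of \cite{izosimov2016coadjoint} to one half, and transport to the other half by $I$ --- is genuinely different from the paper's, which never cuts the surface: it takes a global non-equivariant lift $\Phi$, measures its failure to commute with $I$ by the commutator $\Psi=I\circ\Phi^{-1}\circ I\circ\Phi$ (a fiberwise shift $S_\psi$ along the Hamiltonian field over the zero-crossing edges), and corrects $\Phi$ by a shear $S_{-\xi}$ with $\xi-\iota^*\xi=\psi$. However, your argument has a genuine gap at its central point. You assert that $I$ ``permutes the zero-level circles without fixed points'' and base the matching of $\Phi_+$ and $\Phi_-$ on choosing a fundamental set of such circles. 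This is false: $I$ is free on points of $M$, but it can map a zero-level circle to itself setwise --- this happens precisely over the fixed points of $\iota$, whose number $\#\mathrm{Fix}(\iota)$ is positive in the most interesting cases (e.g.\ the inclined torus of Figure \ref{torusgraph2}) and indeed enters the orbit count in Corollary \ref{cor:dim}. On such a circle $C$ the restriction $I|_C$ is a free involution (a half-period rotation), and you cannot prescribe $\Phi_+|_C$ arbitrarily and then define $\Phi_-|_C$ by equivariance: for the two to agree you need $\Phi_+|_C$ to conjugate $I_F|_C$ to $I_G|_C$, and arranging this (modulo the integer ambiguity of full periods) is exactly where the paper needs its parity argument --- that $\lambda=(\psi+\iota^*\psi)/t$ takes even values on $\iota$-fixed edges because $I$ acts there as a half-period shift. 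Without handling the fixed circles your gluing does not close up, and the same issue infects the equivariant gluing in your construction for part (ii) (the paper's Step 3, removing fixed circles of the constructed involution via Dehn twists, is the counterpart of this difficulty).

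Second, for part (ii) your construction produces \emph{some} symplectic surface with \emph{some} fixed-point-free anti-symplectic involution carrying the prescribed graph; checking that it ``has the right topology'' is not enough. The theorem asks for a function on the \emph{given} $(M,I,\omega)$, so you must still symplectically conjugate the constructed involution to $I$ --- this is the paper's Step 4 and relies on a Moser-type theorem for non-orientable surfaces, which your outline omits entirely. Relatedly, the strengthening of the non-equivariant realization/lifting theorems that you invoke --- realizing a half-graph by a surface whose boundary is a regular level, with prescribed boundary parametrization --- is not the statement available in the cited references (there the restriction of the function to the boundary is required to be Morse, not constant), so it would require a separate argument, as would the smoothness of the glued map across the seam, which you acknowledge but do not resolve.
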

\begin{proof}
We begin with part (i). The implication (a) $\Rightarrow$ (b) is immediate from the definition of a measured Reeb graph of a function. To prove (b) $\Rightarrow$ (a) we will show that any isomorphism between measured Reeb graphs with involution associated with $F$ and $G$ 
   can be lifted to a symplectic diffeomorphism $\Phi \colon M \to M$ such that { $\Phi \circ I = I\circ\Phi$ and} $\Phi_*F=G$.  Let $\phi \colon \Gamma_F \to \Gamma_G$ be an isomorphism of measured Reeb graphs with involution. Then, by \cite[Theorem 3.11(i)]{izosimov2016coadjoint}, it can be lifted to a symplectic diffeomorphism $\Phi \colon M \to M$ such that $\Phi_*F=G$. We need to show that $\Phi$ can be chosen so that it commutes with the involution $I$. The idea is to pick an arbitrary $\Phi$ and then compose it on the right with a suitable {``shear flow''}.  
 
The required shear flow on every level of $F$ will be simply a shift  along the Hamiltonian vector field $\omega^{-1}dF$  by an amount depending on the level.  To find the needed magnitude of the shift on every level, consider the commutator
   \begin{equation}\label{eq:comm}
          \Psi := [I, \Phi^{-1}] =  I \circ \Phi^{-1} \circ I \circ\Phi.
   \end{equation}

  This map $\Psi$ is symplectic and preserves each $F$-level. 
   Let $E$ be the union of open edges of $\Gamma_F$ containing points where $f = 0$. Let also $\pi$ be the projection $M \to \Gamma_F$. Then $M_0 := \pi^{-1}(E)$ is a union of open cylinders foliated into regular $F$-levels. For any smooth function $\psi$ on $E$ (where we define charts of $E$ by using the function $f$) denote by $S_\psi \colon M_0 \to M_0$ 
   a symplectomorphism defined as follows: every point $x \in M_0$ moves for time $\psi(\pi(x))$ along trajectories of the Hamiltonian vector field $\omega^{-1}dF$.

   \begin{lemma}\label{oddlemma}
       There exists a smooth function $\psi \colon E \to \R$ which is odd (i.e. $\iota^*\psi = -\psi$) and satisfies $\Psi\vert_{M_0} = S_\psi$, where  $\Psi\vert_{M_0}$ is the restriction of the commutator map~\eqref{eq:comm} to $M_0$.
   \end{lemma}
   \begin{proof}
       Since $\Psi$ is symplectic and preserves each $F$-level, there exists some (a priori not necessarily odd) smooth function $\psi \colon E \to \R$  such that $\Psi\vert_{M_0} = S_\psi$. Further, observe that since the Hamiltonian vector field $\omega^{-1}dF$ is even, we have \begin{equation}\label{sprop}
       S_\psi \circ I = I \circ S_{\iota^*\psi}.\end{equation} Furthermore, from the definition of the map $\Psi$ we get 
       $I \circ \Psi = \Psi^{-1}\circ I,$
       so, since $\Psi\vert_{M_0} = S_\psi$, we obtain
       $$
I \circ S_\psi = S_{-\psi} \circ I = I \circ S_{-\iota^* \psi},
       $$
       meaning that $S_{\psi + \iota^*\psi}$ is the identity map.     Now, let $t \colon E \to \R$ be a smooth function defined as follows: for every $x \in E$, it is equal to the period of the  Hamiltonian vector field $\omega^{-1}dF$ on the circle $\pi^{-1}(x)$. Then, since  $S_{\psi + \iota^*\psi} = \id$, we have that
       $$
        \lambda := \frac{\psi + \iota^*\psi}{t}
       $$
       is a continuous integer-valued function on $E$, i.e. an element of $H^0(E; \Z)$. Furthermore, since the period function $t$ is even, the same is true for the function $\lambda$, i.e.  $\lambda \in H^0_{even}(E; \Z)$. Consider the map $  H^0(E; \Z) \to H^0_{even}(E; \Z)$ given by $\eta \mapsto \eta + \iota^*\eta$. The image of that map consists of those integral cochains $\eta \in H^0_{even}(E; \Z)$ which take even values on all edges in $E$ fixed by $\iota$. We claim that $\lambda$ has that property and hence belongs to the image. Indeed, every edge $e$ fixed by $\iota$ has a point $x_e$ whose preimage under the projection $\pi \colon M \to \Gamma_F$ is an $F$-level $\gamma_e$ fixed by $I$ (that $x_e$ is the unique point on $e$ where $F=0$). Since $I$ has no fixed points, the restriction of $I$ to that level must be a half-period shift along the vector field $\omega^{-1}dF$. Likewise, the restriction of $I$ to the corresponding $G$-level $\Phi(\gamma_e)$ is half-period shift along $\omega^{-1}dG$. But since the diffeomorphism $\Phi$  maps the Hamiltonian vector field $\omega^{-1}dF$ 
   to the Hamiltonian field $\omega^{-1}dG$, it follows that the commutator $\Psi$ on the $F$-level $\gamma_e$ is the identity. Therefore, $\psi(x_e)$  must be an integer multiple of $T(x_e)$, which forces $\lambda(e)$ to be an even number. So indeed we have $\lambda =  \eta + \iota^* \eta$ for some $\eta \in H^0_{}(E; \Z)$, and replacing $\psi$ with $\psi - \eta t$ we get a function with desired properties.
   \end{proof}

   Back to the proof of the theorem, let $m < 0$ be a real number such that $\inf\nolimits_{x \in e} f(x) < m$ for any edge $e \in E$. Let also $\zeta \colon \R \to \R$ be an odd smooth function which is equal to $-1$ for $x < m$. Define a smooth function $\xi \colon E \to \R$ by
    $
    \xi :=  \frac{1}{2} ( 1 + \zeta \circ f   )\psi. 
  $
 Then $\xi$ is equal to $0$ near lower (i.e. where $f < 0$) endpoints of edges in $E$ and satisfies
    \begin{equation}\label{eq:th}
        \xi - \iota^* \xi = \psi
    \end{equation}
    everywhere in $E$. 
   Let $ \widetilde \Phi:= \Phi \circ S_{-\xi}.$ That is a symplectomorphism $M_0 \to M_0$ pushing $F$ to $G$. Moreover, 
 \begin{gather}
[I, \widetilde \Phi^{-1}] = I \circ \widetilde \Phi^{-1}  \circ  I \circ \widetilde \Phi  = I \circ S_{\xi} \circ \Phi^{-1} \circ I \circ \Phi \circ S_{-\xi} \\ =   S_{\iota^*\xi} \circ I \circ \Phi^{-1} \circ I \circ \Phi \circ  S_{-\xi} = S_{\iota^*\xi} \circ S_\psi \circ S_{-\xi} = \id,  
   \end{gather}
   where the {third} equality follows from \eqref{sprop} and the last one from \eqref{eq:th}. So, $ \widetilde \Phi$ commutes with $I$. Also, since $\xi = 0$ near lower endpoints of edges in $E$, one can smoothly extend $ \widetilde \Phi$ to
   $
   M_- := (M\setminus M_0) \cap \{F < 0\}
   $
   by setting $\widetilde \Phi := \Phi$ in $M_-$. Furthermore, since $ \Phi$ commutes with $I$ in $M_0$, it can be smoothly extended to  
    $
   M_+ := (M\setminus M_0) \cap \{F > 0\}
   $
   by setting $ \widetilde \Phi := I\widetilde \Phi I$ in $M_+$. That way we get an extension of $ \widetilde \Phi$ to all of $M$. It is symplectic, commutes with $I$, and maps $F$ to $G$, as needed. Thus, part (i) of the theorem is proved.
\par\medskip
  Now, let us prove part (ii). The proof consists of four steps: (1) we construct a function $F$ on a symplectic surface which has a given measured Reeb graph and is anti-invariant under an anti-symplectic map $I_1$ (which is not necessarily an involution); (2) we modify $I_1$ so that it becomes an involution, which we call $I_2$; (3) by composing $I_2$ with appropriate Dehn twists we turn it into a fixed-point-free involution $I_3$; (4) we find a symplectomorphism conjugating $I_3$ and $I$, which yields a function with desired properties.
 \par \smallskip
  \emph{Step 1.} By \cite[Theorem 3.11(ii)]{izosimov2016coadjoint}, there exists a (not necessarily odd) simple Morse function $F \colon M \to \R$ whose measured Reeb graph (without involution) is $(\Gamma, f, \mu)$. Consider also $-F$ as a simple Morse function on the symplectic surface $(M, -\omega)$. The measured Reeb graph of the latter function is $(\Gamma, -f, \mu)$. So, the involution $\iota$ is an isomorphism of measured Reeb graphs of $F$ and $-F$ and hence it lifts, by \cite[Theorem 3.11(i)]{izosimov2016coadjoint}, to a diffeomorphism $I_1 \colon M \to M$ such that $I_1^*F = -F$ and $I_1^*\omega = -\omega$. 
   \par \smallskip
\emph{Step 2.} Consider the restriction of $I_1^2$ to the set $M_0$ defined in the proof of part~(i). It is a symplectic diffeomorphism preserving $F$-levels and hence can be written as $S_j$ for a suitable smooth function $j \colon E \to \R$.
Furthermore, we claim that $j$ can be chosen to be even. The proof is similar to that of Lemma~\ref{oddlemma}: since the map $I_1^2$ commutes with $I_1$, which is a lift of $\iota$, we must have

       $$
        \lambda := \frac{j - \iota^*j}{t} \in H^0(E; \Z).
       $$
Furthermore, $\lambda$ is odd and hence can be written as $\eta - \iota^* \eta$ for some $\eta \in H^0(E; \Z)$ (in contrast to the map $  H^0(E; \Z) \to H^0_{even}(E; \Z)$ given by $\eta \mapsto \eta + \iota^*\eta$, the map $  H^0(E; \Z) \to H^0_{odd}(E; \Z)$ given by $\eta \mapsto \eta - \iota^*\eta$ is surjective). So, one can make the function $j$ even by replacing it with $j - \eta t$.

Consider now the function $\xi \colon E \to \R$ defined by $\xi := \frac{1}{2} ( 1 + \zeta \circ f   )j$, where $\zeta \colon \R \to \R$ is a function from the proof of part (i). Then
      $
        \xi + \iota^* \xi = \psi
    $, 
    and thus $I_2 := I_1 \circ S_{-\xi}$ is an involution $M_0 \to M_0$: 
    $$
I_2^2 =  I_1 \circ S_{-\xi} \circ  I_1 \circ S_{-\xi} =S_{-\iota^*\xi}\circ  I_1^2 \circ S_{-\xi} = S_{-\iota^*\xi}\circ  S_{\psi} \circ S_{-\xi} = \id.
    $$
     Furthermore, $I_2$ is anti-symplectic and takes $F$ to $-F$. Also observe that $I_2$ coincides with $I_1$ near preimages of lower endpoints of edges in $E$ and thus can be extended to $M_-$ by setting $I_2 := I_1$ in that domain. Similarly, in $M_+$ we set $I_2 := I_1^{-1}$. That way we  get an anti-symplectic involution $I_2 \colon M \to M$ which takes $F$ to $-F$. 
     
      \par \smallskip
     \emph{Step 3.} The set of fixed points of $I_2$ is a union of $F$-levels, each of which projects to a fixed point of $\iota$. So the fixed point set of $I_2$ is a union of finitely many circles. To show that such fixed points can be removed, it suffices to prove that we can get rid of one fixed circle. This is done by composing $I_2$ with a Dehn twist about that circle. Specifically, to get rid of a fixed circle $\pi^{-1}(x_0)$, where $x_0 \in \Gamma$ is a fixed point of $\iota$, and $\pi \colon M \to \Gamma_f$ is the projection, consider the edge $e$ of $\Gamma$ containing $x_0$. 
    Let $t \colon e \to \R$ be the period function defined as in Lemma \ref{oddlemma}: for every $x \in e$, it is equal to the period of the the Hamiltonian vector field $\omega^{-1}dF$ on the circle $\pi^{-1}(x)$. Since the involution $I_2$ preserves the Hamiltonian vector field $\omega^{-1}dF$, the period function $t$ is even: $\iota^*t = t$. Define a function $\eta \colon e \to \R$ by
      $
    \eta :=  \frac{1}{2} ( 1 + \zeta \circ f  )t,
 $  where $\zeta \colon \R \to \R$ is as above.
   Then     $
        \eta + \iota^* \eta = t,
   $ 
    and so the map $\widetilde I_2 := I_2 \circ S_\eta$, extended to the whole $M$ by setting $\widetilde I_2 := \id$ away from $\pi^{-1}(e)$, is again an involution:
    $$
\widetilde I_2^2 = I_2 \circ S_\eta \circ I_2 \circ S_\eta = S_{\iota^* \eta + \eta} = S_t = \id.
    $$
    Furthermore, just like $I_2$, the involution $\widetilde I_2$ is anti-symplectic and maps $F$ to $-F$. In addition to that, it has less fixed circles than $I_2$. Proceeding in this fashion, we finally get an anti-symplectic fixed-point-free involution $I_3$ such that $ I_3^*F = -F$. 
    
     \par \smallskip
     \emph{Step 4.} It follows from Moser's theorem for non-orientable surfaces \cite[p. 4894]{bruveris2018moser} that $ I_3 = \Phi I \Phi^{-1}$ for some symplectic diffeomorphism $\Phi$. But then $\Phi^*F$ is a function anti--invariant under $I$ whose measured Reeb graph with involution is the given one. Thus, Theorem \ref{classification_of_functions_symplectic} (and hence Theorem \ref{classification_of_pseudo-functions_symplectic})      is proved.
\end{proof}


\begin{remark}
The above description of invariants of pseudo-functions under the action of area-preserving diffeomorphisms extends to the case of non-orientable surfaces $N$ with boundary, cf. \cite{kirillov2023classification}. Recall that in the boundary case 
 \emph{simple Morse functions} $F$ have to satisfy the following conditions:
a) all critical points of $F$ are non-degenerate; 
b) $F$ does not have critical points on the boundary $\partial N$; 
c) the restriction of $F$ to the boundary $\partial N$ is a Morse function; and d) 
all critical values of $F$ and of its restriction $F|_{\partial N}$ are distinct.

A {\it pseudo-function} $F$ on a non-orientable surface $N$ with boundary
 is \emph{simple Morse} if its lift  $\widetilde F$ to the orientation double cover $\widetilde N$ is simple Morse. The Reeb graph of such pseudo-function $F$ (defined as the Reeb graph of $\widetilde F$) contains both solid edges  (corresponding to $\widetilde F$-levels diffeomorphic to a circle) and  dashed edges (corresponding to $\widetilde F$-levels diffeomorphic to a segment). That graph is equipped with an involution induced by the involution of $\widetilde N$. The involution on the graph cannot have fixed points on dashed edges, since the corresponding fixed-point-free involution on the surface $\widetilde N$ cannot map a segment to itself.
\end{remark}

\section{Classification of coadjoint orbits in 2D}
\label{section:orbits}
\subsection{Coadjoint orbits and pseudo-functions} \label{subsection:orbits_to_functions}
Let $\Diff_\rho(N)$ be the group of area-preserving diffeomorphisms of a non-orientable surface $N$ endowed with a density $\rho$. 
In this section we classify generic orbits of the coadjoint action of $\Diff_\rho(N)$ on its regular dual space $\Vect_\rho^*(N) = \Omega^1(N)\,/\,\diff\Omega^0(N)$. Recall that this action coincides with the natural action by pull-backs.

\par

Consider the mapping
    $$\curl \colon \Omega^1(N)\,/\,\diff \Omega^0(N) \to \widetilde \Omega^0(N),$$
defined by taking the \emph{vorticity} pseudo-function
$$
\curl[\alpha] := \frac{\diff \alpha}{\rho}.
$$   This mapping is well-defined on cosets since 
 $\diff(\alpha+\diff f)=\diff\alpha$. 
Furthermore, the mapping $\curl$ is a surjection onto the space $\widetilde \Omega^0(N)$ of pseudo-functions, since $H^2(N; \R) = 0$. Finally, observe that  the mapping $\curl$ is $\Diff_\rho(N)$-equivariant. In other words, the following diagram commutes for any $\Phi \in \Diff_\rho(N)$:

\begin{equation}\label{comDiag3}
\centering
\begin{tikzcd}
\Omega^1(N)\,/\,\diff \Omega^0(N) \ar{r}{\Phi^*} \ar{d}{\curl} & \Omega^1(N)\,/\,\diff \Omega^0(N) \ar{d}{\curl} \\
\widetilde \Omega^0(N) \ar{r}{\Phi^*} & \widetilde \Omega^0(N).
\end{tikzcd}
\end{equation}
\begin{definition}
A coset $[\alpha]\in \Omega^1(N)\,/\,\diff \Omega^0(N)$ is called \emph{simple Morse} if $\curl[\alpha]$ is a simple Morse function. A coadjoint orbit $\pazocal O$ is called \emph{simple Morse} if some (and hence every) coset $[\alpha]\in \pazocal O$ is simple Morse. 
\end{definition}
With every simple Morse coset $[\alpha]\in \Omega^1(N)\,/\,\diff \Omega^0(N)$ one can associate a measured Reeb graph $\Gamma_{\curl[\alpha]}$ with involution. If two simple Morse cosets $[\alpha]$ and $[\beta]$ belong to the same coadjoint orbit then the corresponding Reeb graphs are isomorphic.

The converse statement is more subtle. Indeed, suppose that cosets $[\alpha]$ and $[\beta]$ have isomorphic Reeb graphs. Then it follows from Theorem~\ref{classification_of_pseudo-functions_symplectic} that there exists an area-preserving diffeomorphism $\Phi$ such that $\Phi^*\curl[\beta]=\curl[\alpha].$ Therefore, the 1-form $\Phi^*\beta-\alpha$ is closed. Since this 1-form is not necessarily exact, the cosets $[\alpha]$ and $[\beta]$ do not necessarily belong to the same coadjoint orbit. Nevertheless, we conclude that the space of coadjoint orbits 
corresponding to the same measured Reeb graph with involution is finite-dimensional and its dimension is at most $\dim \Hom^1(N; \R).$ 

\subsection{Even circulation functions on Reeb graphs with involution}
In order to obtain a complete set of invariants of simple coadjoint orbits for the group $\Diff_\rho(N)$, we lift all objects discussed in the previous section to the orientation double cover $M=\widetilde N$ of $N$. That orientation double cover is a symplectic surface with a symplectic form $\omega$ and a fixed-point-free anti-symplectic involution $I$. Our aim is to classify simple Morse cosets $\Omega^1_{even}(M) \,/\, \diff \Omega^0_{even}(M)$ up to even (i.e. commuting with $I$) symplectic diffeomorphisms.  To that end we employ the notion of a circulation function introduced in \cite{izosimov2016coadjoint}. 

Consider a simple Morse coset $[\alpha]\in \Omega^1_{even}(M) \,/\, \diff \Omega^0_{even}(M)$. Then $F := \diff \alpha / \omega$ is an odd simple Morse function on $M$. Let $\Gamma$ be the set of  $F$-levels. Recall that this set has a structure of a measured Reeb graph with involution $\iota$, and such graphs classify pseudo-functions up to area-preserving diffeomorphisms. To obtain classification of orbits, we define an additional structure on $\Gamma$.

Let $\pi \colon M \to \Gamma$ 
be the natural projection. Take any point $x$ lying in the interior of some edge 
$e $ of $ \Gamma$. Then $\pi^{-1}(x)$ is a closed curve $C$ in $M$. It is naturally oriented by the Hamiltonian vector field $\omega^{-1}dF$. The integral of $ \alpha$ 
over $C$ does not change if we change $ \alpha$ by a function differential.
Thus, we obtain a function
$
\circulation \colon \Gamma \setminus V( \Gamma) \to \R
$
given by 
$$
\circulation(x) = \oint\limits_{{\pi^{-1}(x)}}\oneform\,
$$
and defined outside of the set of vertices    $V( \Gamma)$ of the graph $ \Gamma$.

\begin{proposition}\label{circProperties}
The function $\circulation$ has the following properties.
\begin{longenum}
\item Assume that $x,y$ are two interior points of an edge $e$ of $ \Gamma$. Then
\begin{equation}\label{stokes}
    \circulation(y) - \circulation(x) = \int\limits_x^y f\diff \mu.
\end{equation}

\item Let $v$ be a vertex of $\Gamma$. 
Then $ \circulation$ satisfies the Kirchhoff rule at $v$:
\begin{align}
\label{3valentcirc}
 \sum_{{e \to v}} \lim\nolimits_{{x \xrightarrow[]{e} v }} \circulation(x)= \sum_{{e \leftarrow v}} \lim\nolimits_{{x \xrightarrow[]{e} v }} \circulation(x)\,,
\end{align}
where $ \sum_{{e \to v}}$ stands for summation over edges pointing at the vertex $v$,  $\sum_{{e \leftarrow v}}$ stands for summation over edges pointing away from $v$, and $x \xrightarrow[]{e} v$ means $x \in \Gamma \setminus V(\Gamma)$ tends to $v$ along $e$.
\item The function $\circulation$ is even with respect to the involution $\iota$ on $\Gamma$.
\end{longenum}
\end{proposition}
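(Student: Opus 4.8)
The plan is to handle the three properties separately. Properties (i) and (ii) are formally identical to their orientable counterparts in \cite{izosimov2016coadjoint}: they concern only the function $F$, its primitive $\alpha$, and the \emph{orientable} surface $M$, so the involution plays no role and the arguments are purely local applications of Stokes' theorem. Property (iii) is the genuinely new statement, and it is the only place where the anti-symplectic involution $I$ enters; this is where I expect the real work to lie.

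For part (i) I would argue by Stokes' theorem. Taking two interior points $x,y$ of an edge $e$ with, say, $f(x)<f(y)$, the preimage $R:=\pi^{-1}([x,y])$ of the oriented arc from $x$ to $y$ is a closed annulus whose oriented boundary is $\pi^{-1}(y)-\pi^{-1}(x)$, the circle orientations induced by $\omega^{-1}\diff F$ matching the boundary orientation precisely because $f$ increases along $e$. Hence
$$\circulation(y)-\circulation(x)=\oint_{\partial R}\alpha=\int_R \diff\alpha=\int_R F\,\omega.$$
Since $\mu=\pi_*\omega$ and $F$ descends to $f$, pushing the integral forward along $\pi$ gives $\int_R F\,\omega=\int_x^y f\,\diff\mu$, which is \eqref{stokes}. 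For part (ii) I would apply the same Stokes computation to the preimage of a small arc around the vertex $v$: the preimage of a punctured neighborhood of $v$ is a union of annuli, one per incident edge, glued along the critical level. As the arcs shrink to $v$ the enclosed area tends to $0$, so the signed sum of the boundary circulation integrals tends to $0$; the orientations of the boundary circles distinguish incoming from outgoing edges and produce exactly the signs of the Kirchhoff rule \eqref{3valentcirc}. (At a $1$-valent vertex the level collapses to a point, so $\circulation\to 0$.)

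The crux is part (iii). The key lemma I would establish first is that the Hamiltonian vector field $X_F:=\omega^{-1}\diff F$ is \emph{$I$-invariant}, i.e. $I_*X_F=X_F$. Using the identity $i_{I_*X_F}\,\omega=(I^{-1})^{*}\bigl(i_{X_F}\,I^{*}\omega\bigr)$ together with $I^{*}\omega=-\omega$ and $I^{*}F=-F$ (so that $I^{*}\diff F=-\diff F$), the two sign changes cancel and one finds $i_{I_*X_F}\,\omega=\diff F=i_{X_F}\,\omega$; nondegeneracy of $\omega$ then forces $I_*X_F=X_F$. Since $\pi\circ I=\iota\circ\pi$, the involution $I$ carries each oriented level circle $C=\pi^{-1}(x)$ to the oriented level circle $\pi^{-1}(\iota x)=I(C)$, and by the lemma it does so \emph{preserving} the orientation induced by $X_F$. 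Finally, because $\alpha$ is even, $I^{*}\alpha=\alpha$, so the change-of-variables formula for line integrals along the orientation-preserving map $I\colon C\to I(C)$ yields
$$\circulation(\iota x)=\oint_{I(C)}\alpha=\oint_{C}I^{*}\alpha=\oint_{C}\alpha=\circulation(x),$$
which proves that $\circulation$ is even.

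The main obstacle is the orientation bookkeeping in part (iii): one must confirm that $I$ \emph{preserves} rather than reverses the orientation of the level circles, since otherwise the line integral would pick up a minus sign and $\circulation$ would come out odd. Everything hinges on the cancellation of the two sign flips — one from the anti-invariance of $\omega$, one from the anti-invariance of $F$ — encoded in the identity $I_*X_F=X_F$; were only one of $\omega$ or $F$ anti-invariant, the conclusion would be the opposite. I expect parts (i) and (ii) to be routine, with the only delicate point being the sign accounting at the saddle in (ii), which is standard.
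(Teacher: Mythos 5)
Your proposal is correct and follows essentially the same route as the paper: parts (i) and (ii) are the orientable-case Stokes arguments cited from the earlier work, and part (iii) rests on exactly the two facts the paper invokes, namely that $\alpha$ is even and that the Hamiltonian field $\omega^{-1}\diff F$ is even (your cancellation of the two sign flips $I^*\omega=-\omega$, $I^*F=-F$ is the correct justification of the latter). Your expanded orientation bookkeeping in (iii) is a faithful unpacking of the paper's one-line argument.
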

\begin{proof}
The first two properties hold regardless of the presence of involution  \cite{izosimov2016coadjoint}. The last property holds because the form $\alpha$ and the vector field $\omega^{-1}dF$ are both even.
\end{proof}

\begin{definition}
{\rm
Let  $(\Gamma, \iota, f, \mu)$ be a measured Reeb graph with involution. Any function $\circulation \colon \Gamma \setminus V(\Gamma) \to \R$ satisfying properties listed in Proposition \ref{circProperties} is called an \textit{even circulation function}. A measured Reeb graph with involution endowed with an even circulation function is called a \textit{circulation graph with involution}  $(\Gamma, \iota, f, \mu, \circulation)$.

Two circulation graphs with involution are isomorphic if they are isomorphic as measured Reeb graphs with involution, and the isomorphism between them preserves the circulation function.

}
\end{definition}
Above we associated a circulation graph with involution   $\Gamma_{[\oneform]} :=(\Gamma, \iota, f, \mu, \circulation)$ to 
any simple Morse coset $[\alpha]\in \Omega^1_{even}(M) \,/\, \diff \Omega^0_{even}(M)$.
\begin{remark}

    Note that the function $f$ on a circulation graph can be recovered from the circulation function 
$\circulation$, as \eqref{stokes} implies $f= \diff \circulation/\diff \mu$.
\end{remark}

The following result describes the space of even circulation functions on a given measured Reeb graph with involution.

\begin{proposition}
The space of even circulation functions on a measured Reeb graph with involution $(\Gamma, \iota, f, \mu)$ is an affine space whose associated vector space is $\Hom^{odd}_1(\Gamma; \R) := \{ \lambda \in \Hom_1(\Gamma; \R) \mid \iota_*\lambda = -\lambda \}$.
\end{proposition}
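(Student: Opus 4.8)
The plan is to reduce the statement to the corresponding fact without an involution and then track how $\iota$ acts. Recall first that, forgetting $\iota$, the circulation functions on the measured Reeb graph $(\Gamma, f, \mu)$ form an affine space modelled on $H_1(\Gamma;\R)$, as established in \cite{izosimov2016coadjoint}. Indeed, if $\circulation_1, \circulation_2$ are two circulation functions, then their difference $\lambda := \circulation_1 - \circulation_2$ is \emph{constant on each edge} (by \eqref{stokes}, since both satisfy Stokes with the same $f$ and $\mu$) and obeys the homogeneous Kirchhoff rule at every vertex. A function that is constant on each edge (oriented by increasing $f$) and satisfies Kirchhoff's rule is precisely a $1$-cycle, so $\lambda \in Z_1(\Gamma;\R) = H_1(\Gamma;\R)$; conversely, adding any $1$-cycle to a circulation function yields another one.

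First I would check that the set of \emph{even} circulation functions is nonempty, so that it is a genuine affine space. Starting from an arbitrary circulation function $\circulation_0$ (which exists by the non-involutive theory \cite{izosimov2016coadjoint}), I would form the average $\frac{1}{2}(\circulation_0 + \iota^*\circulation_0)$. The one point requiring care is that $\iota^*\circulation_0$ is again a circulation function for the \emph{same} $f$: applying \eqref{stokes} on the edge $\iota(e)$ and using $\iota^*f = -f$ together with the $\iota$-invariance of $\mu$, the two sign reversals — one from $f \mapsto -f$ and one from the reversal of the edge orientation — cancel, so $\iota^*\circulation_0$ obeys Stokes with $+f$; the Kirchhoff rule is preserved because $\iota$ interchanges incoming and outgoing edges at each vertex. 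Averaging then produces an even circulation function, and evenness is immediate since $\iota^2 = \id$.

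The heart of the argument, and the step where the sign bookkeeping is delicate, is the identification of the associated vector space. By the first paragraph, the difference of two even circulation functions lies in $H_1(\Gamma;\R)$ and is an $\iota$-invariant (even) function on edges. I claim that this evenness of the \emph{function} is equivalent to anti-invariance of the corresponding \emph{homology class}, i.e. $\iota_*\lambda = -\lambda$. The reason is that the edges are oriented by increasing $f$, while $\iota^*f = -f$; hence $\iota$ carries an oriented edge $e$ to the edge $\iota(e)$ with the \emph{opposite} orientation, so $\iota_* e = -\iota(e)$ at the level of chains. Writing $\lambda = \sum_e \lambda_e\, e$, this gives $\iota_*\lambda = -\sum_e \lambda_{\iota(e)}\, e$, which equals $-\lambda$ exactly when $\lambda_{\iota(e)} = \lambda_e$ for all $e$, i.e. when $\lambda$ is even as a function. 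Thus the differences of even circulation functions are precisely the classes in $H_1^{odd}(\Gamma;\R)$; conversely, adding any such class to an even circulation function preserves both the circulation-function property and evenness. This identifies the associated vector space with $H_1^{odd}(\Gamma;\R)$.

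I expect the main obstacle to be exactly this last orientation/sign reconciliation: keeping straight that an $\iota$-\emph{even} circulation function corresponds to an $\iota$-\emph{odd} homology class, a reversal caused entirely by the relation $\iota^*f = -f$ flipping the increasing-$f$ orientation of each edge. Once this is pinned down, the affine structure follows formally from the non-involutive case.
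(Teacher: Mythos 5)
Your proof is correct and follows essentially the same route as the paper's: non-emptiness by averaging an arbitrary circulation function with its $\iota$-pullback, and identification of the associated vector space with odd $1$-cycles via the fact that $\iota$ reverses the increasing-$f$ orientation of edges, so that evenness of the edge-function corresponds to oddness of the homology class. The only point the paper makes explicit that you gloss over is that existence of a circulation function in the non-involutive theory is conditional on $\int_\Gamma f\,\diff\mu=0$, which holds here precisely because $f$ is $\iota$-odd and $\mu$ is $\iota$-even.
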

\begin{proof}
By definition, a function $\circulation \colon \Gamma \setminus V(\Gamma) \to \R$ is an even circulation function if it satisfies certain inhomogeneous linear equations. So, the set of even circulation functions on $\Gamma$ is indeed an affine space. Let us first show that it is non-empty. To that end, observe that since $f$ is odd, we have $\int_\Gamma f \diff  \mu = 0$, so by \cite[Propoisition 4.5(i)]{izosimov2016coadjoint} the measured Reeb graph $(\Gamma, f, \mu)$ admits a circulation function $\circulation$. Furthermore, the latter can be made even by considering the averaged function $\frac{1}{2}(\circulation + \iota^*\circulation)$. So, the space of even circulation functions is a solution space of a consistent inhomogeneous linear system, which means that the corresponding vector space is the solution space of the associated homogeneous system. That solution space consists of even functions  $\xi \colon \Gamma \setminus V(\Gamma) \to \R$ which are constant on each edge and satisfy Kirchhoff's rule at each vertex. For each element $\xi$ of that solution space, consider a $1$-chain on $\Gamma$ given by
    $
      \lambda(\xi) := \sum \xi\vert_e \cdot e,
    $
    where the sum is over all edges of $\Gamma$. Then Kirchhoff's equations on $\xi$ are equivalent to $\lambda(\xi)$ being a cycle, i.e. $\lambda(\xi) \in \Hom_1(\Gamma; \R)$. Furthermore, since the involution $\iota$ reverses orientation of edges, $\xi$ is even if and only if $\lambda(\xi)$ is odd. So, the vector space associated with the affine space of even circulation functions on $\Gamma$ is indeed $\Hom^{odd}_1(\Gamma; \R)$, as claimed. 
\end{proof}

\begin{corollary}
    The dimension $d$ of the space of even circulation functions on a measured Reeb graph with involution $(\Gamma, \iota, f, \mu)$ is given by
     \begin{equation}\label{eq:cirdim}
   d = \dim \Hom^{odd}_1(\Gamma; \R) = \frac{1}{2}(  \#\mathrm{Fix}(\iota) +  b_1(\Gamma) - 1),
    \end{equation}
    where $\#\mathrm{Fix}(\iota)$ is the number of fixed points of $\iota$, and $b_1(\Gamma) = \dim H_1(\Gamma, \R)$ is the first Betti number of $\Gamma$. In particular,
   \begin{equation}\label{eq:circin}
       \frac{1}{2}(   b_1(\Gamma) - 1)  \leq d \leq b_1(\Gamma).
    \end{equation}
\end{corollary}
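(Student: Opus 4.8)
The plan is to compute $\dim \Hom_1^{odd}(\Gamma;\R)$ by decomposing the first homology of $\Gamma$ into its even and odd eigenspaces under the induced involution $\iota_*$, and then to relate the dimension of the odd part to the combinatorial data $\#\mathrm{Fix}(\iota)$ and $b_1(\Gamma)$. Since the previous proposition already identifies the associated vector space with $\Hom_1^{odd}(\Gamma;\R)$, the entire content of the corollary is the dimension formula \eqref{eq:cirdim}.

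First I would set up the eigenspace decomposition $\Hom_1(\Gamma;\R) = \Hom_1^{even}(\Gamma;\R) \oplus \Hom_1^{odd}(\Gamma;\R)$, so that $b_1(\Gamma) = \dim \Hom_1^{even} + \dim \Hom_1^{odd}$. The natural tool here is the Lefschetz fixed point theorem applied to the involution $\iota$ acting on the one-dimensional CW complex $\Gamma$. For an involution, the Lefschetz number equals the Euler characteristic of the fixed point set, and since $\iota$ fixes finitely many points (by property (iv) of Definition~\ref{def:Reeb_graph}, $f$ takes distinct nonzero values at vertices, so $\iota$ cannot fix a vertex, and an edge fixed by $\iota$ contains exactly one fixed point where $f=0$), the fixed point set is $\#\mathrm{Fix}(\iota)$ isolated points. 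The Lefschetz number is $L(\iota) = \tr(\iota_* \mid H_0) - \tr(\iota_* \mid H_1)$. Because $\Gamma$ is connected, $\iota_*$ acts trivially on $H_0$, giving $\tr(\iota_*\mid H_0) = 1$; on $H_1$ the trace is $\dim\Hom_1^{even} - \dim\Hom_1^{odd}$. Equating $L(\iota) = \#\mathrm{Fix}(\iota)$ yields
\begin{equation}
\#\mathrm{Fix}(\iota) = 1 - \big(\dim \Hom_1^{even}(\Gamma;\R) - \dim \Hom_1^{odd}(\Gamma;\R)\big).
\end{equation}
Combining this with $b_1(\Gamma) = \dim \Hom_1^{even} + \dim \Hom_1^{odd}$ and solving the two linear equations for $\dim \Hom_1^{odd}$ gives exactly $d = \tfrac{1}{2}(\#\mathrm{Fix}(\iota) + b_1(\Gamma) - 1)$, as desired.

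The step I expect to require the most care is the verification that the Lefschetz number genuinely equals $\#\mathrm{Fix}(\iota)$, i.e.\ that each fixed point contributes $+1$. This needs that the fixed points are nondegenerate in the Lefschetz sense, with the correct local index. A clean way to sidestep any subtlety is to work simplicially: after subdividing $\Gamma$ so that $\iota$ is a simplicial involution with each fixed point a vertex and no edge mapped to itself with orientation reversed, the Lefschetz number can be computed directly as the alternating sum of traces on simplicial chain groups, where the trace in each degree counts precisely the simplices fixed by $\iota$ (each contributing $+1$ since fixed edges are impossible after subdivision and fixed vertices contribute $+1$). This bypasses the index computation entirely and makes the count of fixed points transparent. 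Finally, the bounds \eqref{eq:circin} follow immediately: since $0 \le \#\mathrm{Fix}(\iota) \le b_1(\Gamma) + 1$ (the upper bound holding because a connected graph with $b_1(\Gamma)$ independent cycles admits an involution fixing at most $b_1(\Gamma)+1$ points, which can be read off from the same trace computation, as $\dim\Hom_1^{odd} \le b_1(\Gamma)$ forces $\#\mathrm{Fix}(\iota) \le b_1(\Gamma)+1$), substituting the extreme values of $\#\mathrm{Fix}(\iota)$ into \eqref{eq:cirdim} gives the two inequalities.
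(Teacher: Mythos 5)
Your proof is correct and follows essentially the same route as the paper, which simply invokes the Hopf trace formula to get $\#\mathrm{Fix}(\iota) = 1 - \dim \Hom^{even}_1(\Gamma;\R) + \dim \Hom^{odd}_1(\Gamma;\R)$ and then solves the linear system; your simplicial subdivision argument is just an unpacking of that formula. The extra care you take with fixed-point indices and with locating the fixed points on $\iota$-invariant edges at $f=0$ is accurate but already implicit in the paper's one-line appeal to the trace formula.
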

\begin{proof}
By the Hopf trace formula we have
\begin{gather}
\#\mathrm{Fix}(\iota) = 1 -  \dim \Hom^{even}_1(\Gamma; \R) +  \dim \Hom^{odd}_1(\Gamma; \R) \\ = 1 -  \dim \Hom^{}_1(\Gamma; \R)  +  2\dim \Hom^{odd}_1(\Gamma; \R),
\end{gather}
hence the result.
\end{proof}

\begin{remark}
Another way to express this dimension is $d= b_1(\Gamma)- b_1(\Gamma / \iota)$. Indeed, this follows from the fact that  odd classes form the kernel of the projection $\Hom^{}_1(\Gamma; \R) \to \Hom^{}_1(\Gamma / \iota, \R)$. 
\end{remark}

\begin{remark}
The inequality $d \leq b_1(\Gamma)$ holds since the space $\Hom^{odd}_1(\Gamma; \R)$ is a subspace of $\Hom^{}_1(\Gamma; \R)$. That inequality is also equivalent to $\#\mathrm{Fix}(\iota) \leq b_1(\Gamma) + 1$. The latter is true since the set $\{f = 0\}$ splits $\Gamma$ into two connected components and hence consists of at most $b_1(\Gamma) + 1$ points, while the fixed point set $\mathrm{Fix}(\iota)$ is a subset of $\{f = 0\}$.

It is easy to see that there are no other restrictions on the number $\#\mathrm{Fix}(\iota)$ in addition to $0 \leq \#\mathrm{Fix}(\iota) \leq b_1(\Gamma) + 1$ and $\#\mathrm{Fix}(\iota) \equiv b_1(\Gamma) + 1 \mod 2$, so that all integer dimensions $d$ satisfying \eqref{eq:circin} can occur.
\end{remark}

\begin{example}\label{ex:rp2graph}
Assume that the graph $\Gamma$ is a tree. Then $\dim \Hom^{odd}_1(\Gamma; \R) = 0$, so there is a unique even circulation function on $\Gamma$.

\end{example}

\begin{example}\label{ex:kleingraph}
Assume that $b_1(\Gamma) = 1$. Then inequalities \eqref{eq:circin} imply that the dimension $d$ of the space of even circulation functions on $\Gamma$ is $0$ or $1$. Furthermore, by  formula \eqref{eq:cirdim} we have that $d = 0$ if and only if the involution $\iota$ on $\Gamma$ has no fixed points. An example of such an involution is shown in Figure \ref{torus} in the introduction. As for the case~$d=1$, that corresponds to two fixed points, see Figure \ref{torusgraph2}.

\end{example}

\subsection{Proof of the classification theorem}
The main result of this section is the following classification of generic coadjoint orbits for
the group of measure-preserving diffeomorphisms of a non-orientable surface:

\begin{theorem}\label{thm:sdiffN}
Let $N$ be a closed connected non-orientable surface equipped with a density $\rho.$ Then simple Morse coadjoint orbits of $\Diff_\rho(N)$
are in one-to-one correspondence with isomorphism classes of circulation graphs compatible with $(N,\rho)$. 
\end{theorem}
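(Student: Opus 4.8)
The plan is to transfer everything to the orientation double cover $M=\widetilde N$, where, by the identifications of Section~\ref{section:diffeo}, the statement becomes a classification of even simple Morse cosets $[\alpha]\in\Omega^1_{even}(M)\,/\,\diff\Omega^0_{even}(M)$ up to symplectomorphisms commuting with the anti-symplectic involution $I$, the invariant being the circulation graph with involution $\Gamma_{[\alpha]}=(\Gamma,\iota,f,\mu,\circulation)$ attached to $[\alpha]$ in the previous subsection. I would establish three statements: (A) $[\alpha]\mapsto\Gamma_{[\alpha]}$ descends to orbits; (B) it is injective; (C) it surjects onto circulation graphs compatible with $(N,\rho)$. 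Statement (A) is pure equivariance: an even symplectomorphism $\Phi$ sends the vorticity $F=\diff\alpha/\omega$ to $\Phi^*F$, induces an isomorphism of measured Reeb graphs with involution (it commutes with $I$, hence with $\iota$), and preserves $\circulation(x)=\oint_{\pi^{-1}(x)}\alpha$ because pull-back preserves periods and $\Phi$ maps Reeb fibers to Reeb fibers; this is exactly the content of diagram~\eqref{comDiag3} together with Proposition~\ref{circProperties}.

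For injectivity (B), suppose $[\alpha]$ and $[\beta]$ have isomorphic circulation graphs with involution. First I would apply the pseudo-function classification (Theorem~\ref{classification_of_pseudo-functions_symplectic}, in the detailed form of Theorem~\ref{classification_of_functions_symplectic}) to the underlying measured Reeb graph with involution: this yields an even symplectomorphism realizing the graph isomorphism and equating the two vorticities. Replacing $\beta$ by its pull-back, I may assume $\diff\alpha=\diff\beta=F\omega$ and that the circulation functions coincide, so that $\gamma:=\beta-\alpha$ is a closed \emph{even} $1$-form whose period over every Reeb fiber $\pi^{-1}(x)$ vanishes. It then remains to kill $[\gamma]$ by an even $F$-preserving symplectomorphism. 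The orientable coadjoint classification of \cite{izosimov2016coadjoint} provides such a map; crucially, the relevant maps are level-wise ``shears'' along the Hamiltonian field $\omega^{-1}\diff F$, which automatically preserve $\circulation$ (they move points within fibers) while realizing prescribed horizontal flux. Since $\gamma$ is even, the required shear function $\psi$ can be chosen even, and then by the parity identity~\eqref{sprop} the shear $S_\psi$ commutes with $I$. Thus $S_\psi$ is an even $F$-preserving symplectomorphism with $S_\psi^*\beta\equiv\alpha$, showing that $[\alpha]$ and $[\beta]$ lie on the same orbit.

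For surjectivity (C), start from a circulation graph with involution $(\Gamma,\iota,f,\mu,\circulation)$ compatible with $(N,\rho)$, equivalently with $(M,I,\omega)$. The realization part of the pseudo-function classification, Theorem~\ref{classification_of_functions_symplectic}(ii), produces an odd simple Morse function $F$ on $M$ whose measured Reeb graph with involution is $(\Gamma,\iota,f,\mu)$. Because $f$ is odd, $\int_M F\omega=\int_\Gamma f\,\diff\mu=0$, so $F\omega$ is exact; choosing a primitive and symmetrizing by $\tfrac12(\alpha+I^*\alpha)$ gives an even $1$-form $\alpha_0$ with $\diff\alpha_0=F\omega$ and an even circulation function $\circulation_0$. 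The difference $\circulation-\circulation_0$ lies in $\Hom^{odd}_1(\Gamma;\R)$, and I would realize it as the tuple of fiber periods of a closed even $1$-form: this is possible because twisted Poincar\'e duality on $M$ pairs closed even $1$-forms non-degenerately with odd fiber cycles, so the fiber-period map $\Hom^1_{even}(M;\R)\to\Hom^{odd}_1(\Gamma;\R)$ is onto. Adding this form to $\alpha_0$ produces an even coset with the prescribed circulation graph; the compatibility conditions of Definition~\ref{def:compatible2} guarantee that the Betti numbers and total masses match, so the construction is consistent.

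The hard part is the equivariant flux step inside (B), together with its mirror image in (C): one must show that the fluxes of \emph{even} $F$-preserving symplectomorphisms realize exactly the even closed $1$-forms with vanishing fiber periods, i.e. that the flux computation of \cite{izosimov2016coadjoint} is compatible with the involution parity. This is the only place where the involution genuinely interacts with the symplectic structure, and I expect to handle it by the same even/odd decomposition of shear functions that underlies Lemma~\ref{oddlemma}, together with a dimension count matching $\dim\Hom^{odd}_1(\Gamma;\R)$ against the dimension $b_1(N)-\dim\Hom^{odd}_1(\Gamma;\R)$ of the residual flux, consistent with Corollary~\ref{cor:dim0}.
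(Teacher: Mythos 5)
Your proposal is correct and follows essentially the same route as the paper: pass to the double cover, use Theorem~\ref{classification_of_functions_symplectic} to reduce to a common vorticity $F\omega$, realize the residual even class with vanishing fiber periods as the flux of an even level-wise shear along $\omega^{-1}\diff F$ (the paper does this via the time-one flow of $G\,\omega^{-1}\diff F$ with $G$ symmetrized by averaging with $I^*G$, using Lemma 4.8 of \cite{izosimov2016coadjoint}), and prove surjectivity from the surjectivity of the fiber-period map $\Hom^1_{even}(M;\R)\to\Hom_1^{odd}(\Gamma;\R)$ via twisted Poincar\'e duality. The ``hard part'' you flag at the end is handled in the paper exactly by the even/odd averaging you anticipate, with no dimension count needed.
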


Compatibility of a graph and non-orientable surface is understood as in Definition~\ref{def:compatible2}. Since a circulation graph is also a measured Reeb graph (with an additional structure), the definition applies.

An equivalent and more detailed form of this classification, which we are going to prove, can be formulated in terms of the corresponding orientation double cover:

\begin{theorem}\label{thm4} \label{thm:sdiffM} 
Let $(M, I, \omega)$ be a closed connected symplectic surface equipped with a fixed-point-free anti-symplectic involution $I$. Then generic orbits of the action of even (i.e. commuting with $I$) symplectomorphisms of $M$ on the coset space $[\alpha]\in \Omega^1_{even}(M) \,/\, \diff \Omega^0_{even}(M)$ are in one-to-one correspondence with (isomorphism classes of) circulation graphs compatible with $M$ (in the sense of Definition \ref{def:compatible}).
In other words, the following statements hold:
\begin{longenum}
\item For two simple Morse cosets $[\oneform], [\oneformtwo] \in  \Omega^1_{even}(M) \,/\, \diff \Omega^0_{even}(M)$, the following conditions are equivalent:
\begin{longenum} \item $\Phi_*[\oneform] =  [\oneformtwo]$ for some even symplectomorphism $\Phi$;  \item circulation graphs $\Gamma_{[\oneform]}$ and $\Gamma_{[\oneformtwo]}$ corresponding to the cosets $[\oneform]$ and $[\oneformtwo]$ 
are isomorphic.\end{longenum}
\item For each circulation graph $(\Gamma, \iota, f, \mu, \circulation)$ which is compatible 
 with $(M,\omega)$, there exists a simple Morse coset $  [\alpha]\in \Omega^1_{even}(M) \,/\, \diff \Omega^0_{even}(M)$ such that  $\Gamma_{[\oneform]} =(\Gamma, \iota, f, \mu, \circulation)$.
\end{longenum}
\end{theorem}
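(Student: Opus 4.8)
The plan is to work on the orientation double cover $M=\widetilde N$ and to upgrade the orientable classification of \cite{izosimov2016coadjoint} to an $I$-equivariant statement, in the same spirit in which Theorem~\ref{classification_of_functions_symplectic} upgrades the function classification. Throughout I identify an orbit with the pair (vorticity, circulation function) and repeatedly use that a symplectomorphism carries the Hamiltonian field $\omega^{-1}\diff F$ to $\omega^{-1}\diff G$ orientation-preservingly. The implication (a)$\Rightarrow$(b) is then immediate: if $\Phi$ is even and $\Phi^*[\oneformtwo]=[\oneform]$, the equivariance diagram \eqref{comDiag3} gives $\Phi^*\curl[\oneformtwo]=\curl[\oneform]$, so $\Phi$ descends to an isomorphism $\phi$ of measured Reeb graphs with involution; and since $\Phi(\pi^{-1}(x))=\pi^{-1}(\phi(x))$ preserves orientation, $\circulation_{\oneformtwo}(\phi(x))=\oint_{\pi^{-1}(\phi(x))}\oneformtwo=\oint_{\pi^{-1}(x)}\Phi^*\oneformtwo=\circulation_{\oneform}(x)$, so $\phi$ is an isomorphism of circulation graphs.

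For (b)$\Rightarrow$(a) I first reduce to a normal form. Given an isomorphism of circulation graphs, lifting its underlying measured-graph isomorphism via Theorem~\ref{classification_of_functions_symplectic}(i) yields an even symplectomorphism matching the vorticities; after applying it I may assume $\curl[\oneform]=\curl[\oneformtwo]=F$ and, since the lifted map induces the circulation-preserving graph isomorphism, $\circulation_{\oneform}=\circulation_{\oneformtwo}$. Then $\gamma:=\oneform-\oneformtwo$ is a closed even $1$-form; by \eqref{stokes} its circulation is constant on edges and obeys Kirchhoff's rule \eqref{3valentcirc}, and the hypothesis says this circulation vanishes, i.e. $[\gamma]\in H^1_{even}(M;\R)$ pairs trivially with every fibre cycle $[\pi^{-1}(x)]$. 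The remaining task is to realize $[\gamma]$ by an even, $F$-preserving symplectomorphism $\Phi$, so that $\Phi^*[\oneformtwo]=[\oneform]$.

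To realize $[\gamma]$ I would build $\Phi$ out of level shears $S_\xi$ along the trajectories of $\omega^{-1}\diff F$, together with Dehn twists about $\iota$-fixed levels, exactly the maps used in the proof of Theorem~\ref{classification_of_functions_symplectic}. The key computation is that such a shear preserves every fibre circle, hence acts trivially on circulation, while changing the coset $[\oneformtwo]$ by a \emph{horizontal} (zero-circulation) class determined by its fluxes across the edges of $\Gamma$; by \eqref{sprop} it commutes with $I$ precisely when $\xi$ is even. Thus the heart of the matter is to check that even shears and $\iota$-invariant twists realize \emph{all} even zero-circulation classes. The dimension count is consistent: the space of such classes has dimension $b_1(N)-d$ with $d=\dim H^{odd}_1(\Gamma;\R)$, matching the expected fibre of the even circulation map. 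I expect this surjectivity to be the main obstacle, and in particular the bookkeeping over the edges fixed by $\iota$ — where $I$ restricts to a half-period shift on the central level and forces the twist parameters to be even integers, exactly as in Lemma~\ref{oddlemma} and Step~3 of Theorem~\ref{classification_of_functions_symplectic}. An alternative is to first invoke \cite{izosimov2016coadjoint} for a (non-even) $F$-preserving symplectomorphism $\Phi_0$ with $\Phi_0^*[\oneformtwo]=[\oneform]$ and then symmetrize: the commutator $\Psi=[I,\Phi_0^{-1}]$ of \eqref{eq:comm} is a level-preserving $S_\psi$ with $\psi$ odd, and correcting $\Phi_0$ by an odd shear $S_{-\xi}$ with $\xi-\iota^*\xi=\psi$ makes it even; the extra point, absent in the function case, is to choose this correction with vanishing flux so that it does not disturb the coset relation.

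Finally, for part (ii) I would produce a realizing coset by averaging, which is clean because vorticity and circulation are both linear in the $1$-form. Using the orientable circulation theory \cite{izosimov2016coadjoint} together with the odd function $F$ supplied by Theorem~\ref{classification_of_functions_symplectic}(ii), choose a (not necessarily even) coset $[\oneform_0]$ with $\diff\oneform_0=F\omega$ whose circulation is the prescribed $\circulation$. Since $I^*F=-F$ and $I^*\omega=-\omega$, the form $I^*\oneform_0$ again has vorticity $F$; and because $I$ preserves the orientation of fibre circles while $\circulation$ is even, $\oneform_0$ and $I^*\oneform_0$ have the same circulation. Hence the averaged form $\oneform:=\frac{1}{2}(\oneform_0+I^*\oneform_0)$ is even, satisfies $\diff\oneform=F\omega$, and has circulation $\circulation$, so $\Gamma_{[\oneform]}$ is the given circulation graph. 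Together with part (i) this proves the theorem.
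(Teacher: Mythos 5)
Your reduction of part (i) to the normal form $\curl[\oneform]=\curl[\oneformtwo]=F$ with equal circulation functions is exactly the paper's first move, and your (a)$\Rightarrow$(b) is fine. The genuine gap is where you yourself flag it: you never prove that the even zero-circulation class $[\gamma]=[\oneform]-[\oneformtwo]\in \Hom^1_{even}(M;\R)$ can be realized by an even, $F$-preserving symplectomorphism. Both of your proposed routes (spanning all such classes by even shears and $\iota$-invariant Dehn twists, or symmetrizing a non-even $\Phi_0$ by a shear chosen to have vanishing flux) hinge on a surjectivity/flux-adjustment statement that you do not establish, and that is the actual crux of the theorem. The paper closes it by a Moser-type argument imported from the orientable theory: by \cite[Lemma 4.8]{izosimov2016coadjoint}, a class with zero periods over all $F$-levels is represented by a closed $1$-form $H\,\diff F$ with $H$ divisible by $F$; setting $G:=H/F$ and averaging $G\mapsto \tfrac12(G+I^*G)$ (legitimate since the class is even), the time-$1$ flow $\Phi_1$ of the even field $X:=G\,\omega^{-1}\diff F$ preserves $F$, commutes with $I$, and shifts the coset by exactly $[GF\diff F]=[\gamma]$, because $\tfrac{d}{dt}\Phi_t^*[\oneformtwo]=\Phi_t^*[i_X F\omega]$. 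This single lemma replaces all the shear/Dehn-twist bookkeeping, and in particular the delicate behaviour at edges fixed by $\iota$ that you anticipate; without it (or an equivalent substitute) your part (i) is incomplete.

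For part (ii) your argument is correct but genuinely different from the paper's, and arguably cleaner. The paper proves surjectivity of $[\oneform]\mapsto\circulation_{[\oneform]}$ by identifying its linearization with the pushforward $\pi_*\colon \Hom_1^{odd}(M;\R)\to\Hom_1^{odd}(\Gamma;\R)$ via twisted Poincar\'e duality. You instead realize the prescribed even circulation function by an arbitrary coset $[\oneform_0]$ with $\diff\oneform_0=F\omega$ (supplied by the orientable theory) and average, $\oneform:=\tfrac12(\oneform_0+I^*\oneform_0)$; the two observations that make this work --- $I^*(F\omega)=F\omega$ since both $F$ and $\omega$ are odd, and $\circulation_{I^*\oneform_0}=\iota^*\circulation_{\oneform_0}=\circulation$ because $I$ preserves the orientation of the fibres by $\omega^{-1}\diff F$ --- are both correct. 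This is the same averaging the paper applies to circulation functions when showing the space of even circulation functions is nonempty, just carried out one level up, at the level of $1$-forms; it buys you a realization proof that avoids the Poincar\'e-duality computation, at the cost of invoking the orientable realization theorem for circulation functions.
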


\begin{proof}
    We first prove part (i). The implication (a) $\Rightarrow$ (b) is by construction, so we only need to prove  (b) $\Rightarrow$ (a). In view of Theorem \ref{classification_of_functions_symplectic}, it suffices to consider the case $\diff  [\alpha] = \diff  [\beta] = F\omega$ and prove that if the circulation functions on the graph~$\Gamma$ of~$F$ given by cosets $[\oneform], [\oneformtwo]$ are the same, then there is an even symplectic diffeomorphism $\Phi \in \Diff_\omega(M)$ such that $\Phi^* [\beta] = [\alpha]$. Consider $\xi := [\alpha] - [\beta]$. Then~$\xi \in \Hom^1_{even}(M)$. Furthermore, since the circulation functions of $[\oneform]$ and $ [\oneformtwo]$ coincide, it follows that the class $\xi$ has zero periods over $F$-levels. Therefore, by \cite[Lemma 4.8]{izosimov2016coadjoint}, there exists a smooth function $G \in \Omega^0(M)$   
    such that the $1$-form $GFdF$ is closed and its cohomology class is $\xi$. (The lemma says that there is $H \in \Omega^0(M)$ such that $HdF$ is closed and its class is $\xi$. Furthermore, that $H$ is divisible by $F$ in $\Omega^0(M)$, so we just set $G := H/F$.) Furthermore, since the class $\xi$ is even, without loss of generality we can assume that $G$ is even as well, i.e.  $G \in \Omega^0_{even}(M)$ (if not, we replace $G$ with $\frac{1}{2}(G + I^*G)$).

    Consider even symplectic vector field $X := G \omega^{-1}dF$. Then, for the flow $\Phi_t$ of $X$, we have
    $$
    \frac{d}{dt} \Phi_t^* [\beta] =  \Phi_t^* L_X [\beta] = \Phi_t^* [i_X F\omega] = [GFdF] = \xi.
    $$
    In particular, the time-$1$ flow $\Phi_1$ of $V$ takes $[\beta]$ to $[\alpha]$, as needed.

    \smallskip

    We now prove part (ii). By Theorem \ref{classification_of_functions_symplectic}, there exists an odd simple Morse function $F \in \Omega^0_{odd}(M)$ whose measured Reeb graph with involution is $(\Gamma, \iota, f, \mu)$. We need to show that the map from the affine space of cosets
    $
     [\alpha]\in \Omega^1_{even}(M) \,/\, \diff \Omega^0_{even}(M) $ such that $\diff \alpha = F \omega$ to the space of even circulation functions on $\Gamma$, given by mapping a coset $[\alpha]$ to the associated circulation function $\circulation_{[\alpha]}$, is surjective. To that end consider the associated map of vector spaces $ \Hom^1_{even}(M ; \R) \to \Hom_1^{odd}(\Gamma; \R)$ which takes a class $[\beta] \in \Hom^1_{even}(M ; \R)$ to a chain $\sum \beta(e)e$ where $\beta(e)$ is the integral of $\beta$ over the preimage of any interior point of $e$ under the projection $\pi \colon M \to \Gamma$. Upon identification $\Hom^1_{even}(M ; \R) \simeq \Hom_1^{odd}(M ; \R)  $ given by (twisted) Poincar\'e duality, that vector space map becomes the projection $\pi_* \colon \Hom_1^{odd}(M ; \R) \to  \Hom_1^{odd}(\Gamma; \R) $, which is surjective. Therefore, the map $[\alpha] \mapsto \circulation_{[\alpha]}$ between affine spaces is surjective as well.  Thus, the theorem is proved.
\end{proof}

\begin{corollary}\label{cor:dim}
Let $N$ be a closed connected non-orientable surface equipped with a density $\rho$.
Then the space of coadjoint orbits of the group $\Diff_\rho(N)$ corresponding to the same measured Reeb graph $(\Gamma, \iota, f,\mu)$ is an affine space of dimension 
    $$
   d = \dim \Hom^{odd}_1(\Gamma; \R) = \frac{1}{2}(  \#\mathrm{Fix}(\iota) +  b_1(N) - 1),
    $$
    where $\#\mathrm{Fix}(\iota)$ is the number of fixed points of $\iota$, and $b_1(N) = \dim H_1(N, \R)$ is the first Betti number of $N$. In particular,
   \begin{equation}
      \frac{1}{2}(  b_1(N) - 1)  \leq d \leq b_1(N).
    \end{equation}
\end{corollary}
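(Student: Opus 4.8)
The plan is to combine the classification of orbits in Theorem~\ref{thm:sdiffN} with the dimension count \eqref{eq:cirdim} for even circulation functions, the only genuinely new input being the translation of $b_1(\Gamma)$ into $b_1(N)$ via the compatibility condition. By Theorem~\ref{thm:sdiffN} (or, on the double cover, Theorem~\ref{thm:sdiffM}), simple Morse coadjoint orbits of $\Diff_\rho(N)$ are in bijection with isomorphism classes of circulation graphs compatible with $(N,\rho)$. A circulation graph is a measured Reeb graph with involution $(\Gamma,\iota,f,\mu)$ enriched by an even circulation function $\circulation$, so fixing the measured Reeb graph and letting $\circulation$ vary describes exactly the orbits whose vorticity Reeb graph equals $(\Gamma,\iota,f,\mu)$.

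First I would appeal to the proposition describing even circulation functions: the set of such functions on $(\Gamma,\iota,f,\mu)$ is an affine space modeled on $\Hom^{odd}_1(\Gamma;\R)$. Hence the space of coadjoint orbits with the prescribed measured Reeb graph is an affine space of dimension $\dim\Hom^{odd}_1(\Gamma;\R)$, which by \eqref{eq:cirdim} equals $\tfrac{1}{2}(\#\mathrm{Fix}(\iota)+b_1(\Gamma)-1)$.

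It then remains to rewrite this in terms of the topology of $N$. By Definition~\ref{def:compatible2}, a measured Reeb graph compatible with $(N,\rho)$ satisfies $b_1(\Gamma)=b_1(N)$. Substituting this equality into \eqref{eq:cirdim} gives the stated formula $d=\tfrac{1}{2}(\#\mathrm{Fix}(\iota)+b_1(N)-1)$, and substituting into \eqref{eq:circin} yields the two-sided bound $\tfrac{1}{2}(b_1(N)-1)\le d\le b_1(N)$.

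The one point deserving care is whether the orbit space is honestly the affine space of even circulation functions or its quotient by graph automorphisms: since property (iv) of Definition~\ref{def:Reeb_graph} forces $f$ to assume distinct values at the vertices, every automorphism of $(\Gamma,\iota,f,\mu)$ must fix each vertex and can only permute parallel edges of equal measure, so the automorphism group is finite. A finite group action leaves the dimension unchanged, so the count above is unaffected. This is the (mild) obstacle I would expect; the dimension computation itself is immediate from the cited results.
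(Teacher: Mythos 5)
Your proposal is correct and follows essentially the same route the paper intends: Corollary~\ref{cor:dim} is obtained by combining Theorem~\ref{thm:sdiffN} with the affine-space description of even circulation functions, the Hopf-trace computation \eqref{eq:cirdim}, and the compatibility identity $b_1(\Gamma)=b_1(N)$ from Definition~\ref{def:compatible2}. Your closing remark about quotienting by the (finite) automorphism group of the measured Reeb graph is a reasonable extra precaution that the paper leaves implicit, and it does not affect the dimension count.
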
 
Note that for an orientable surface $M$ the corresponding dimension $d$ is always $ \frac{1}{2}   b_1(M)$, i.e. the genus of $M$.

\begin{example}\label{ex:RP2}
Consider the projective plane $\RP^2.$ The first homology group $\Hom_1(\RP^2; \R)$ is trivial. Therefore, in this case there is a one-to one correspondence between generic coadjoint orbits and measured Reeb graphs with involution, in agreement with Example \ref{ex:rp2graph}. 
\end{example}

\begin{example}\label{ex:Klein3p} 
Here we elaborate on Example \ref{ex:Klein3} from the introduction. The function on a torus shown in Figure~\ref{torus} defines  a pseudo-function on the Klein bottle $K^2.$ One has $b_1(K^2)=1$, while the involution $\iota$ has no fixed points. Therefore, in this case the space of coadjoint obits corresponding to the given measured Reeb graphs with involution is $0$-dimensional, i.e. the graph completely determines the orbit, just like in Example \ref{ex:RP2}.

Now consider a donut lying on a horizontal table, and let $F$ be the height function on its surface, normalized so that the center of symmetry of the donut is at height $0$. Then $F$ is odd with respect to the central symmetry.  Furthermore, even though $F$ is not a Morse function (its critical points are degenerate and form two circles), we can still consider the corresponding graph $\Gamma_F$ defined as the set of $F$-levels with quotient topology, and that graph is equipped with an involution $\iota$ induced by central symmetry of the donut. Topologically, the graph $\Gamma_F$ is a circle, while the involution $\iota$ is given by axial symmetry and has two fixed points. Now consider a small odd Morse perturbation of $F$ (e.g. consider the height function for a donut on a slightly inclined table). Then each critical circle of $F$ will fall apart into two Morse critical points, and the resulting graph with involution will be as shown in Figure \ref{torusgraph2}: by continuity the involution on the graph still has two fixed points. The so-obtained function on the torus can again be thought as a pseudo-function on the Klein bottle $K^2$. By Corollary \ref{cor:dim}, the space of coadjoint orbits of $\Diff_\rho(K^2)$ corresponding to such a function is $1$-dimensional, as opposed to the height function on a ``standing torus'' where the dimension of the orbit space is $0$. Note that $0$ and $1$ are the only possible dimensions of the orbit space for the Klein bottle, see Example \ref{ex:kleingraph}.
\end{example}

\medskip

\medskip

\begin{appendices}

\section{Motivation: The Hamiltonian framework of the Euler equation}\label{sect:hydro}

\subsection{The Euler equation}

The main motivation for classification of {coadjoint orbits for the group of measure-preserving diffeomorphisms} is related to description of first integrals for the Euler equation of ideal hydrodynamics. 
Consider an inviscid incompressible  fluid filling a compact{, possibly non-orientable,} $n$-dimensional Riemannian manifold  $M$ 
with the Riemannian density form $\rho$. 
The motion of an inviscid incompressible  fluid filling 
$M$  is governed by the hydrodynamic Euler equation
\begin{equation}\label{idealEuler}
\partial_t u+\nabla_u u=-\nabla p
\end{equation}
on  the divergence-free velocity field $u$ of a fluid flow in $M$. Here $\nabla_u u$ stands for  the Riemannian 
covariant derivative of the field $u$ along itself, while the function $p$  is determined by the divergence-free 
condition up to an additive constant.

Arnold  \cite{arnold1966geometry} showed that the Euler equation can be regarded as an 
equation of the geodesic flow on the group $\Diff_\rho(M):=\{\phi\in \Diff(M)~|~\phi_*\rho=\rho\}$ 
of measure-preserving diffeomorphisms of $M$ with respect to a right-invariant metric on the group 
given at the identity by the $L^2$-norm of the fluid's velocity field.
This geodesic description implies  the following Hamiltonian framework for the Euler equation.  
Consider the regular dual space $\mathfrak g^*=\Vect_\rho^*(M)$ 
of the Lie algebra $\mathfrak g={\Vect}(M)=\{u\in {\Vect}(M) \mid L_u\rho=0\}$ of divergence-free vector fields on $M$.
This dual space  has a natural  description 
as the  space of cosets  $\Vect_\rho^*(M)=\Omega^1(M) \,/\, \diff \Omega^0(M)$, where $\Omega^k(M)$ is the space of smooth $k$-forms on $M$.
For a 1-form $\alpha$ on $M$ its coset of 1-forms is 
$$
[\alpha]=\{\alpha+df \mid f\in C^\infty(M)\}\in \Omega^1(M) \,/\, \diff \Omega^0(M)\,.
$$
The pairing between cosets and divergence-free vector fields is given by 
$$ [\alpha](u):=\int_M\alpha(u)\,\rho$$ for any  field $u\in {\Vect_\rho}(M)$ 
(note that for a non-orientable manifold $M$ this integral is still well-defined as the integral of a function $\alpha(u)$ against the density pseudo-form $\rho$). This pairing is well-defined on cosets because the latter integral vanishes for any exact $1$-form $\alpha$ and any $u \in \Vect_\rho(M)$.
The coadjoint action of the group $\Diff_\rho(M)$ on the dual 
 $\mathfrak g^*$ is given by the change of coordinates in (cosets of) 1-forms on $M$ 
 by means of measure-preserving diffeomorphisms.

The Riemannian metric $(\,,)$ on the manifold $M$ allows one to
identify the Lie algebra and its regular dual by means of the so-called inertia operator:
given a vector field $u$ on $M$  one defines the 1-form $\alpha=u^\flat$ 
as the pointwise inner product with  the velocity field $u$:
$u^\flat(v): = (u,v)$ for all $v\in TM$. 
The Euler equation rewritten on 1-forms $\alpha=u^\flat$ becomes
$\partial_t \alpha+L_u \alpha=-dP$
for  an appropriate function $P$ on $M$.
In terms of the cosets of 1-forms $[\alpha]$, the Euler equation on the dual space $\mathfrak g^*$ takes the form
\begin{equation}\label{1-forms}
\partial_t [\alpha]+L_u [\alpha]=0\,.
\end{equation}
The latter Euler equation on $\mathfrak g^*=\Vect_\rho^*(M)$ turns out to be a Hamiltonian equation with the Hamiltonian functional   given by the fluid's kinetic energy.
The  Hamiltonian operator is given by the Lie algebra coadjoint action ${\rm ad}^*_u$, 
which  in the case of the diffeomorphism group corresponds to the Lie derivative: ${\rm ad}^*_u=L_u$.
The symplectic leaves of the corresponding Lie-Poisson structure on the dual space $\mathfrak g^*$ 
are coadjoint orbits of the corresponding group $\Diff_\rho(M)$, see details in \cite{arnold1966geometry, arnold1999topological}.
All invariants of the coadjoint action, also called Casimirs,  are first integrals of the Euler equation
for {\it any choice} of Riemannian metric.

\subsection{Vorticity and Casimirs}
 Introduce the {\it vorticity 2-form} $\xi:=\diff u^\flat$ as the differential of the 1-form 
$\alpha=u^\flat$ on $M$ and note  that  the vorticity exact 2-form is well-defined for cosets $[\alpha]$: 
1-forms $\alpha$ in the same coset have equal vorticities $\xi=\diff \alpha$. 
Written in terms of $\xi$, the Euler equation~\eqref{1-forms} assumes the vorticity (or Helmholtz) form $$\partial_t \xi+L_u \xi=0,$$
which means that the vorticity form is transported by (or ``frozen into'') the fluid flow (Kelvin's theorem).
The definition of vorticity $\xi$ as an exact 2-form $\xi=\diff u^\flat$ makes sense for a manifold $M$ 
of any dimension and regardless of orientability of~$M$.

In this paper{,} we consider the case of a non-orientable manifold of dimension~$2$. In this setting, the vorticity of the fluid can be regarded as a {\it vorticity pseudo-function} $F=\diff u^\flat/\rho$, where $\rho$ is the Riemannian area form. Since the vorticity pseudo-function  $F$ is transported by the flow, one can define Casimirs generalizing the notion of enstrophies for orientable surfaces, which are all moments
of  $F$ over the surface $M$. In the non-orientable case, one considers only \emph{even} moments

$$
 I_{k}(F):=\int_{M} \!\! F^k\,\rho, ~k=0,2,4,...
$$
which are well-defined since even powers of a pseudo-function are functions and can be integrated against a density. 

To obtain a full set of Casimirs (invariants of  the $\Diff_\rho(M)$-action), one needs to consider similar integrals but computed for each edge of the Reeb graph separately, cf. \cite{izosimov2017classification}. {Furthermore, if $H^1(M, \rho) \neq 0$, one needs to supplement the so-defined moments with invariants which determine the circulation function. In hydrodynamical terms, those invariants can be viewed as fluid circulation, i.e., integrals of the $1$-form $\alpha = u^\flat$, along appropriately defined cycles on $M$.}

\section{Singular coadjoint orbits in the non-orientable setting}\label{section:vort}

Assume that $M$ is a closed, possibly non-orientable,  manifold with $H^1(M, \R) = 0$. Then the regular dual $\Vect_\rho^*(M)$ to the algebra of divergence-free vector fields on $M$ can be dentified with the space $d\Omega^1(M)$ of exact $2$-form on $M$. The coadjoint action of $\Diff_\rho(M)$  on this space is given by pull-back of $2$-forms. Here, we consider an extension of this action to singular exact $2$-forms (de Rham currents), in particular those of the form $\delta_P$, where $P \subset M$ is a null-homologous codimension $2$ closed submanifold. In the context of hydrodynamics, voticities of the form $\delta_P$ correspond to point vortices in dimension $2$, and \emph{vortex membranes} in higher dimensions.

\subsection{Marsden--Weinstein symplectic structure on co-oriented submanifolds}

Let $N$ be a closed non-orientable manifold. In view of the pairing $$ \Omega^k(N) \times \widetilde \Omega^{n-k}(N) \to \R,$$ it is natural to define a \emph{singular $k$-form} (equivalently, a de Rham current of degree $n-k$) as a continuous (in appropriate sense) functional on $ \widetilde \Omega^{n-k}(N)$. In particular, for a codimension $2$ co-oriented closed submanifold $P \subset N$, one has a singular $2$-form $\delta_P$ whose value on a pseudo-form $\omega$ of degree $n-2$ is given by
$\int_P \omega$.

\begin{proposition}
The current $\delta_P$ is closed. Furthermore, if  $P$ is a boundary, then $\delta_P$ is exact.
\end{proposition}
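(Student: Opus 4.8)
The plan is to define the exterior differential on currents by duality and then reduce both claims to Stokes' theorem applied to $P$ itself. Recall that $\delta_P$ is a singular $2$-form, i.e.\ a functional on $\widetilde\Omega^{n-2}(N)$, so its differential $\diff\delta_P$ should be a singular $3$-form, i.e.\ a functional on $\widetilde\Omega^{n-3}(N)$. First I would fix the sign convention
$$(\diff T)(\omega) := (-1)^{k+1}\, T(\diff\omega), \qquad \omega\in\widetilde\Omega^{n-k-1}(N),$$
for a singular $k$-form $T$, chosen so that under the embedding of smooth even forms into currents (via $\beta\mapsto(\omega\mapsto\int_N\beta\wedge\omega)$) this operation agrees with the usual differential. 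The verification is a one-line computation: expanding $\diff(\beta\wedge\omega)=\diff\beta\wedge\omega+(-1)^k\beta\wedge\diff\omega$ and using that $\int_N\diff(\beta\wedge\omega)=0$ on the closed manifold $N$ yields exactly the sign above.

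For closedness, note that for $\omega\in\widetilde\Omega^{n-3}(N)$ the form $\diff\omega$ is an odd $(n-2)$-form, so it can be integrated over the co-oriented $(n-2)$-dimensional submanifold $P$. Hence
$$(\diff\delta_P)(\omega)=-\,\delta_P(\diff\omega)=-\int_P\diff\omega=-\int_{\partial P}\omega=0,$$
where the penultimate equality is Stokes' theorem on $P$ and the last one follows from $\partial P=\emptyset$. Thus $\delta_P$ is closed. For exactness, suppose $P$ bounds, i.e.\ $P=\partial Q$ for a compact co-oriented submanifold (or chain) $Q$ of dimension $n-1$, where $P$ carries the induced boundary co-orientation. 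Then the prescription $\delta_Q(\omega):=\int_Q\omega$ for $\omega\in\widetilde\Omega^{n-1}(N)$ defines a singular $1$-form, and Stokes' theorem on $Q$ gives, for every $\omega\in\widetilde\Omega^{n-2}(N)$,
$$(\diff\delta_Q)(\omega)=\delta_Q(\diff\omega)=\int_Q\diff\omega=\int_{\partial Q}\omega=\int_P\omega=\delta_P(\omega),$$
so that $\delta_P=\diff\delta_Q$ is exact.

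The only genuine subtlety --- and the step I would treat most carefully --- is the use of integration of \emph{odd} forms over co-oriented submanifolds together with the corresponding Stokes theorem. Here one uses that a co-orientation of $P$ trivializes the factor $o(\nu)$ in the restriction $o(N)|_P\cong o(P)\otimes o(\nu)$ (with $\nu$ the normal bundle of $P$ in $N$), turning the restricted odd $(n-2)$-form into a genuine density on $P$ that can be integrated without orienting $P$. The twisted Stokes identity then follows from the classical one either by locally trivializing the orientation bundle $o(N)$ and patching via the global co-orientation, or by lifting to the orientation double cover $\widetilde N$, where $\widetilde P=\pi^{-1}(P)$ is a closed oriented manifold with $\partial\widetilde P=\emptyset$. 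I would also make explicit the matching of co-orientation conventions, so that the boundary co-orientation of $P=\partial Q$ is precisely the one used in defining $\delta_P$; this is exactly what makes the signs in the last display agree. Everything beyond this twisted bookkeeping is formal duality.
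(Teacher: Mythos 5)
Your proof is correct, and the closedness part is essentially the paper's argument: both reduce to the twisted Stokes theorem on the closed co-oriented manifold $P$, via $\delta_P(d\beta)=\int_P d\beta=\int_{\partial P}\beta=0$. For exactness the two routes differ slightly. The paper works with the duality characterization implicit in its Section on twisted Poincar\'e duality --- a current is declared closed if it annihilates exact pseudo-forms and exact if it annihilates closed ones --- so it only checks that $\int_P\omega=\int_Q d\omega=0$ for every \emph{closed} odd $(n-2)$-form $\omega$. You instead define $d$ on currents by duality (with the sign fixed so that it extends the smooth differential) and exhibit the explicit primitive $\delta_Q$ with $d\delta_Q=\delta_P$. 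Your version is marginally stronger and arguably cleaner: the paper's characterization of exactness for \emph{currents} (as opposed to smooth forms) tacitly invokes a de Rham-type statement that annihilating closed forms implies being the differential of another current, whereas your construction sidesteps this entirely --- and the primitive $\delta_Q$ is in any case exactly what the paper needs later in the proof that $\omega^{MW}=\omega^{KK}$. Your attention to the co-orientation bookkeeping (trivializing $o(\nu)$ in $o(N)|_P\cong o(P)\otimes o(\nu)$ and matching the boundary co-orientation of $P=\partial Q$) is a genuine point that the paper leaves implicit.
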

\begin{proof}
Closed currents are those which vanish on exact pseudo-forms. This is clearly the case for $\delta_P$, by Stokes theorem for non-orientable manifolds. Furthermore, if $P = \partial Q$, then $\int_P \omega = \int_Q d\omega = 0$ for any closed pseudo-form $\omega$ of degree $n-2$, so $\delta_P$ is exact.
\end{proof}

\medskip

We will call codimension $2$ co-oriented closed null-homologous  submanifolds $P$ in $ N$ vortex membranes.  The tangent space  at $P$ to the space of all vortex membranes in $N$ can be naturally identified with the space of smooth sections of the normal bundle to $P$.
Let $V$ and $W$ be a pair of such sections. Suppose that $N$ is equipped with a density $\rho$. 

\begin{definition}
    The {\it generalized  Marsden--Weinstein symplectic structure} on the space of membranes is defined as
$$
\omega_P^{\text{MW}}(V, W):=\int_P i_V i_W \rho.
$$
\end{definition}

Since $P$ is co-oriented and $\rho$ is a pseudo-form of degree $n = \dim N$, the inner product  $i_V i_W \rho$ is a well-defined pseudo-form of degree $n-2$ on $P$. Therefore, it can be integrated over $P$, and the corresponding  integral  is well defined.
One can see that it is skew-symmetric and non-degenerate on normal vector fields $V$ and $W$.
As we will see below, it is also closed and hence defines a symplectic structure on the space of membranes.

Recall that each vortex membrane $P \subset N$ gives rise to a singular exact $2$-form $\delta_P$. As a result, one can view the space of vortex membranes $P$ (equivalently, the space of the corresponding forms $\delta_P$) as a singular coadjoint orbit of $\Diff_\rho(N)$. The action of $\Diff_\rho(N)$ on vortex membranes is just the natural action of diffeomorphisms on submanifolds. Thus, since the space of vortex membranes is a coadjoint orbit, it carries a natural (Kirillov--Kostant) symplectic structure.

\begin{proposition}  \label{prop:KK=MW}
The Marsden--Weinstein symplectic structure $\omega^{MW}$ coincides with the Kirillov--Kostant symplectic structure $\omega^{KK}$.
\end{proposition}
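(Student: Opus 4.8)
The plan is to compute both symplectic structures explicitly at a membrane $P$ and verify that they agree on an arbitrary pair of tangent vectors, which are sections $V, W$ of the normal bundle to $P$. The strategy mirrors the classical orientable computation (as in Marsden--Weinstein and Arnold--Khesin), being careful at each stage about the odd/even parity of the forms involved so that everything remains well-defined in the non-orientable setting. First I would recall that the Kirillov--Kostant form on a coadjoint orbit through a point $\delta_P \in \Vect_\rho^*(N)$ is given by $\omega^{KK}(\ad^*_u \delta_P, \ad^*_w \delta_P) = \langle \delta_P, [u,w] \rangle$, where $u, w \in \Vect_\rho(N)$ are divergence-free fields whose coadjoint action $\ad^*_u = L_u$ produces the tangent vectors $V, W$ respectively. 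Thus the first task is to relate the infinitesimal coadjoint action $L_u \delta_P$ to the normal section $V$ that moves $P$.

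The key geometric step is the identification of tangent vectors to the orbit with normal sections. Moving $P$ by the flow of $u$ displaces it by the normal component of $u|_P$; I would show that $L_u \delta_P = \delta_{P}'$ corresponds, under the tangent-space identification stated in the excerpt, to $V := (u|_P)^\perp$, the normal projection of $u$ along $P$. Concretely, for a test pseudo-form $\omega \in \widetilde\Omega^{n-2}(N)$ one has $\langle L_u \delta_P, \omega \rangle = \langle \delta_P, L_u \omega\rangle = \int_P L_u \omega = \int_P (d\,i_u + i_u\,d)\omega$, and by Stokes for non-orientable manifolds the $d\,i_u\omega$ term integrates to zero over the closed $P$, leaving $\int_P i_u\, d\omega$, which depends only on the normal component of $u$ along $P$. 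This is the place where co-orientation of $P$ and the parity bookkeeping must be handled with care, since $i_u\omega$ and $i_u d\omega$ are pseudo-forms and the integral over the co-oriented $P$ must be checked to be well-defined.

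With the identification in hand, I would substitute into the Kirillov--Kostant pairing: $\omega^{KK}(V,W) = \langle \delta_P, [u,w]\rangle = \int_P \iota_{[u,w]}^* (\cdots)$, and then use the Cartan identity $i_{[u,w]} = L_u i_w - i_w L_u$ together with $L_u \rho = L_w \rho = 0$ to rewrite this as an integral of $i_u i_w \rho$ over $P$, modulo exact terms that vanish upon integration over the closed membrane. The upshot is precisely $\int_P i_V i_W \rho = \omega^{MW}_P(V,W)$, since only the normal components of $u$ and $w$ survive the contraction against the top-degree density restricted to the codimension-$2$ submanifold. I expect the main obstacle to be the careful verification that all pseudo-form contractions and restrictions to $P$ are well-defined and that the orientation-twisting does not introduce spurious signs: one must check that $i_V i_W \rho$, a pseudo-form of degree $n-2$, integrates correctly against the co-orientation of $P$, and that the boundary terms discarded via Stokes genuinely vanish in the twisted (odd-form) de Rham complex. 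Once the parity and co-orientation conventions are pinned down, the remaining manipulations are the standard Cartan-calculus identities and pose no difficulty.
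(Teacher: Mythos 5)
Your overall strategy (evaluate both forms on normal sections $V,W$ extended to divergence-free fields, then use Cartan calculus and $L_u\rho=L_w\rho=0$) is in the right spirit, and your identification of $\ad^*_u\delta_P$ with the normal component of $u|_P$ matches the paper. But there is a genuine gap in the final computation, and it traces back to an object you never introduce: a primitive of $\delta_P$. The Kirillov--Kostant pairing lives on the regular dual $\Omega^1(N)/\diff\Omega^0(N)$, so the current $\delta_P$ enters only through $d^{-1}\delta_P$. This is where the hypothesis that $P$ is \emph{null-homologous} is used: writing $P=\partial Q$, one has $\delta_P=d\delta_Q$ with $\langle\delta_Q,\omega\rangle=\int_Q\omega$, and the correct pairing is
\begin{equation}
\omega^{KK}_{\delta_P}(V,W)=\langle\delta_Q,[v,w]\rangle=\int_N\delta_Q\wedge i_{[v,w]}\rho=\int_Q i_{[v,w]}\rho .
\end{equation}
Your version, $\langle\delta_P,[u,w]\rangle=\int_P\iota^*_{[u,w]}(\cdots)$, places the integral on $P$ itself, which cannot be right on degree grounds: $i_{[u,w]}\rho$ is an odd $(n-1)$-form and cannot be integrated over the $(n-2)$-dimensional $P$. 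It must be integrated over the $(n-1)$-dimensional filling $Q$.

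Once the integral is correctly located on $Q$, the decisive step is the identity $i_{[v,w]}\rho=d(i_vi_w\rho)$ for divergence-free $v,w$ (this is what your Cartan identity $i_{[u,w]}=L_ui_w-i_wL_u$ yields after using $L_v\rho=L_w\rho=0$ and $d\rho=0$). Stokes' theorem then gives $\int_Q i_{[v,w]}\rho=\int_{\partial Q}i_vi_w\rho=\int_P i_vi_w\rho=\omega^{MW}_{\delta_P}(V,W)$. In other words, the exact term is not something that ``vanishes upon integration over the closed membrane'' --- it is the \emph{entire answer}, and the boundary term produced by Stokes over $Q$ is precisely the Marsden--Weinstein integrand on $P=\partial Q$. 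Your concluding sentence has the logic inverted, and without $Q$ and the identity $i_{[v,w]}\rho=d(i_vi_w\rho)$ the argument does not close. The parity bookkeeping you emphasize is indeed necessary but is the routine part; the substance of the proof is the passage $\delta_P\rightsquigarrow\delta_Q\rightsquigarrow\int_Q d(i_vi_w\rho)\rightsquigarrow\int_P i_vi_w\rho$.
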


\begin{proof} 
Suppose that $P = \partial Q$. Then $\delta_P = d \delta_Q$, where $\delta_Q$ is a singular $1$-form whose pairing with a pseudo-form $\omega$ of degree $n-1$ is given by $\int_Q \omega$. Let $v$, $w$ be divergence-free vector fields whose restrictions to the normal bundle of $P$ are two given sections $V$, $W$. Note that since the coadjoint action of $\Diff_\rho(N)$ on membranes is the natural action of diffeomorphisms on submanifolds, the corresponding infinitesimal action of $\Vect_\rho(N)$ is precisely the restriction: $\ad^*_v \delta_P = V$, $\ad^*_w \delta_P = W$. So, we have
\begin{gather}\label{KK}
\omega^{KK}_{\delta_P}(V,W) =\omega^{KK}_{\delta_P}(\ad^*_v \delta_P,\ad^*_w \delta_P) :=\langle d^{-1}\delta_P ,~[v,w] \rangle 
\\ =\langle \delta_Q, ~[v,w] \rangle  =\int_{N} \delta_Q \wedge i_{[v,w]} \rho=\int_Q i_{[v,w]} \rho
=\int_P i_vi_w\rho = \omega^{MW}_{\delta_P}(V,W)  .
\end{gather}

Note that for divergence-free vector fields $v$ and $w$ their commutator satisfies the identity $i_{[v,w]}\rho=d(i_vi_w\rho)$, which implies the second last equality. 
\end{proof}

\medskip

\subsection{Binormal equation on non-orientable manifolds}
Now, suppose that the manifold $N$ is endowed with a Riemannian metric. In this setting, there is a natural Hamiltonian flow on the symplectic manifold of vortex membranes in $N$. 
\begin{definition}
Define a Hamiltonian function on  membranes $P$ (which are co-oriented closed submanifolds of codimension 2) by taking their $(n-2)$-Riemannian volume: 
$$
H(P ) = \text{volume}(P) =  \int_P \mu_P,
$$
where $\mu_P$ is the volume pseudo-form on $P$ defined by restricting the metric from $N$. Once again,  the integral of the density $\mu_P$ is well-defined since $P$ is a co-oriented submanifold of $N$.
\end{definition}

\begin{theorem} \label{thm:binormal}
In any dimension $n\ge 3$ the Hamiltonian vector field for the Hamiltonian $H$ and the Marsden--Weinstein  symplectic $\omega^{MW}$ structure on codimension 2 membranes $P\subset N$ is 
$$
v_{H}(p)=C\cdot J({\bf MC}(p))\,,
$$ 
where $C$ is a constant depending on the geometry of $N$, $J$ is the operator of positive $\pi/2$ rotation in every (oriented) normal space $N_pP$ to $P$, and ${\bf MC}(p)$ is the mean curvature vector to $P$ at the point $p$.
\end{theorem}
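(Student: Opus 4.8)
The plan is to use the defining property of a Hamiltonian vector field: $v_H$ is the unique normal field along $P$ satisfying $\omega_P^{\text{MW}}(v_H, W) = \diff H_P(W)$ for every normal variation $W$ of $P$. Thus the argument splits into two independent computations — an explicit evaluation of the pairing $\omega^{\text{MW}}$ on the left, and the first variation of the volume functional $H$ on the right — after which I match them and invoke nondegeneracy of $\omega^{\text{MW}}$ to solve for $v_H$ pointwise.

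First I would make the Marsden--Weinstein form completely explicit. Fix $p \in P$ and choose a frame adapted to $P$: a positively co-oriented orthonormal basis $\nu_1, \nu_2$ of the normal plane $N_pP$ together with an orthonormal basis $e_1, \dots, e_{n-2}$ of $T_pP$. Since $\rho$ is the Riemannian density, $\rho(\nu_1, \nu_2, e_1, \dots, e_{n-2}) = +1$ under the co-orientation, and only the normal components of $V, W$ survive in $\rho(V, W, e_1, \dots, e_{n-2})$, because any tangential component lies in $\mathrm{span}(e_1, \dots, e_{n-2})$ and kills the determinant. Expanding $V, W$ in the basis $\nu_1, \nu_2$ then yields the pointwise identity
$$ i_V i_W \rho\big|_P = \langle JV, W\rangle\, \mu_P $$
(up to an overall sign fixed by the interior-product convention), where $J$ is the $+\pi/2$ rotation in the co-oriented plane $N_pP$ and $\mu_P$ is the induced volume pseudo-form. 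Integrating gives $\omega_P^{\text{MW}}(V, W) = \int_P \langle JV, W\rangle\, \mu_P$. This is where the (co-)orientation bookkeeping must be handled carefully: the co-orientation of $P$ is precisely what orients each normal plane and thereby makes $J$ well-defined, and it is simultaneously what allows the odd $(n-2)$-form $i_V i_W \rho$ and the density $\mu_P$ to be integrated over a possibly non-orientable $P$; one must also check that the reordering $\rho(\nu_1, \nu_2, e_\bullet) \leftrightarrow \mu_P(e_\bullet)$ contributes sign $(-1)^{2(n-2)} = +1$.

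Second, I would invoke the classical first variation of volume: for a normal deformation $W$ one has $\diff H_P(W) = -\int_P \langle \mathbf{MC}, W\rangle\, \mu_P$, with $\mathbf{MC}$ the mean curvature vector (trace of the second fundamental form). This formula is local and purely Riemannian, hence insensitive to orientability, so the standard computation transfers verbatim to the co-oriented pseudo-form setting. Equating the two expressions gives $\int_P \langle J v_H, W\rangle\, \mu_P = -\int_P \langle \mathbf{MC}, W\rangle\, \mu_P$ for all normal $W$; nondegeneracy forces $J v_H = -\mathbf{MC}$ pointwise, and since $J^{-1} = -J$ for a $\pi/2$ rotation this yields $v_H = J(\mathbf{MC})$, up to an overall constant.

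The main obstacle is not analytic but combinatorial: tracking signs and pinning down the precise meaning of each object in the odd-form, co-oriented framework, so that the pointwise identity for $i_V i_W \rho|_P$ comes out correctly. The constant $C$ absorbs the normalization convention for the mean curvature vector (sum versus average of principal curvatures), the interior-product sign above, and, if $\rho$ differs from the Riemannian density by a factor, that proportionality; once conventions are fixed $C$ is a single real number. A secondary point worth verifying is that the $v_H$ so produced is genuinely normal: tangential test fields $W$ annihilate both sides, so $v_H$ is determined only up to a tangential part and may be taken normal, consistent with the identification of the tangent space to membrane space with sections of the normal bundle.
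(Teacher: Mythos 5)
Your argument is correct and follows essentially the same route as the paper's proof: identify the Marsden--Weinstein form pointwise with the symplectic form of the co-oriented normal plane (so that pairing against $J$ inverts it), combine with the first variation of volume ${\bf MC}=-\mathrm{grad}\,H$, and solve $\omega^{MW}(v_H,\cdot)=\diff H$ for $v_H$. The paper merely sketches this and defers the computation to the references for the oriented case, noting locality; your write-up supplies the same computation explicitly, with the sign and co-orientation bookkeeping correctly absorbed into the constant $C$.
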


Recall that the mean curvature vector  ${\bf MC}(p)\in N_pP$ for a smooth submanifold $P$ of dimension $\ell$  is the normalized trace of the second fundamental form at $p$, i.e. the trace divided by $\ell$. Equivalently, this vector ${\bf MC}(p)\in N_pP$ is the mean of curvature vectors of geodesics in $P$ passing through the point $p$ when  averaged over the sphere $S^{\ell-1}$ of all possible unit tangent vectors in $T_p P$ for these geodesics.

\begin{proof}
The above theorem holds for any Riemannian manifold $N$, oriented or not. For oriented manifolds this was obtained in \cite{KhMMJ, Shash, Vizman}. However, the consideration is local and is valid for any $N$.
The sketch of the proof is as follows. It is well known that the mean curvature vectors point in the direction of the fastest decrease of the manifold's volume, i.e. they make the field ${\bf MC}(p)=-{\rm grad }\, H(p)$. Furthermore, the Marsden--Weinstein symplectic structure is in fact the symplectic structure  in the normal plane $N_pP$ averaged over all $p\in P$. 
Hence to make the skew-gradient field $v_H(p)=-J{\rm grad }\, H(p)$ out of the gradient one, one needs to apply the operator $J$ of the almost complex structure in each normal plane, which completes the proof.
\end{proof}

\begin{definition}
The {\it binormal (or skew-mean-curvature) flow} on membranes  $P\subset N$
is given by the equation
\begin{equation}\label{skew-mean}
\partial_t P(p)=-J({\bf MC}(p))\,.
\end{equation}
\end{definition}

Note that the skew-mean-curvature flow does not stretch the submanifold $P$, while moving its points orthogonally to the mean curvatures. In particular, the volume $H(P)$ of the submanifold $P$ is preserved under this evolution, as it should, being the  Hamiltonian function of the corresponding dynamics.

For dimension $n=3$ the mean curvature vector is the curvature vector $k  \cdot{\bf n}$ of a curve $\gamma$:
${\bf MC}=k \cdot {\bf n}$, while the skew mean-curvature flow becomes the binormal equation:
$$
\partial_t \gamma=-J(k \cdot  {\bf n})=k\cdot {\bf b}=\gamma'\times \gamma'',
$$
where the last equality is valid in the arc-length parametrization $\theta$ of $\gamma$, $\gamma':=\partial \gamma/\partial \theta$.

\begin{remark}
Recall that one can lift all the objects to the naturally orientented double cover $\widetilde N$, equipped with the orientation reversing involution without fixed points. Then $P$ gives rise to its orientation cover $\widetilde P\subset \widetilde N$, and one may think of singular vorticities in the non-orientable setting as singular vorticities in the orientable setting that are equivariant under the orientation-reversing involution. This way one can reformulate the 
binormal  flow on membranes   via the equivariant version on the corresponding double cover.
\end{remark}

\end{appendices}

\bibliographystyle{plain}
\bibliography{main}
\addcontentsline{toc}{section}{References}
\end{document}